\theoremstyle{plain}
\newtheorem{theorem}{Theorem}[section]
\newtheorem{proposition}[theorem]{Proposition}
\newtheorem{lemma}[theorem]{Lemma}
\newtheorem{corollary}[theorem]{Corollary}
\let\a\alpha
\let\c\chi
\let\e\epsilon
\let\e\varepsilon
\def\F{{\mathbb F}}
\def\Z{{\mathbb Z}}
\def\Zp{{\Z_{p}}}
\def\Q{{\mathbb Q}}
\begin{document}

\title{{\bf Toward the ergodicity of  $p$-adic 1-Lipschitz functions represented by the van der Put series}}
\author{{ Sangtae Jeong}
\\Department of Mathematics, Inha University, Incheon, Korea
402-751}

\footnotetext{{\em Keywords}: Ergodic functions, Measure-preserving, 1-Lipschitz, Van der Put basis, Mahler basis \\
{\em Mathematics Subject Classification}  2000: 11S80 11K41

\noindent {\rm Email}: stj@inha.ac.kr}

\bibliographystyle{alpha}
\maketitle

\begin{abstract}
Yurova \cite{Yu}  and Anashin et al. \cite{AKY1, AKY2} characterize the ergodicity of a 1-Lipschitz function on $\Z_2$ in terms of the van der Put expansion.
Motivated by their recent work, we provide the sufficient conditions for the ergodicity of such a function defined on a more general setting $\Z_p$.
 In addition, we provide alternative proofs of two criteria (because of \cite{AKY1, AKY2} and \cite{Yu} ) for an ergodic 1-Lipschitz function on $\Z_2,$ represented by both the Mahler basis and the van der Put basis.

\end{abstract}

\section{\bf Introduction}

The ergodic theory of $p$-adic  dynamical systems is an important part of non-Archimedean dynamics, and represents  a rapidly developing discipline that has
recently demonstrated its effectiveness in various areas such as computer science, cryptology, and numerical analysis, among others.
For example, as shown in \cite{KS}, it is useful to have $2
$-adic ergodic functions in constructing long-period
pseudo-random sequences in stream ciphers. For more details on such applications, we refer the reader to \cite{AK}  and the references therein.

As a substitute for the Mahler basis, the van der Put basis has recently been employed as a useful tool for  building on the
ergodic theory of $p$-adic dynamical systems. Indeed, Yurova \cite{Yu}  and Anashin et al. \cite{AKY1, AKY2} provide the
criterion for the ergodicity of 2-adic 1-Lipschitz functions, in terms of the van der Put expansion.
Their proof of this criterion relies on Anashin's criterion for  1-Lipschitz functions on $\Z_2$ in terms of the Mahler expansion.
Given the characteristic functions of $p$-adic balls, it is analyzed in  \cite{AKY2} that the van der Put basis
has more advantages than the Mahler basis in evaluating representations and that it is more applicable to $T$-functions or 1-Lipschitz functions.

On the other hand, on the function field side of non-Archimedean dynamics,  Lin et al. \cite{LSY} present an ergodic theory parallel to \cite{An1} and \cite{AKY1,AKY2} by
using both Carlitz-Wagner basis and an analog of the van der Put basis.
Along this line, Jeong \cite{J} uses the digit derivative basis to develop a corresponding theory parallel to \cite{LSY}.

The purpose of the paper is to provide the sufficient conditions under which 1-Lipschitz functions on $\Z_p$ represented by the van der Put series are ergodic.
In addition, we provide  alternative proofs of two known criteria for an ergodic 1-Lipschitz function on $\Z_2$ in  terms of both the Mahler basis and the van der Put basis.
We also present several equivalent conditions that may be needed to provide a complete description of the ergodicity of   1-Lipschitz functions defined on a more general setting
$\Z_p.$ The main idea behind this paper comes from Lin et al's work \cite{LSY} on $\F_2[[T]],$ and  Anashin et al.'s work \cite{AKY2} on $\Z_2.$

The rest of this paper is organized as follows: Section 2 recalls some prerequisites in non-Archimedean dynamics, including two known results for the ergodicity of 1-Lipschitz functions on $\Z_2$
in terms of the Mahler basis and the van der Put basis. Section 3 presents the main results and alternate proofs of two criteria  for
 an ergodic 1-Lipschitz function on $\Z_2.$  Section 4 employs our results or Anashin's to re-prove then ergodicity of a polynomial over $\Z_2$ in terms of its coefficients.


\section{Ergodic theory of $p$-adic integers}
We recall the existing results for the measure-preservation and ergodicity of 1-Lipschitz functions $f: \Z_2 \rightarrow \Z_2$ in terms of both the
 Mahler expansion and the van der Put expansion.

\subsection{Preliminaries for $p$-adic dynamics}
We recall  the elements of $p$-adic dynamical systems on $\Z_p.$
Let $p$ be a prime and $\Zp$ be the ring of  $p$-adic integers with the quotient field $\Q_p.$
Let $|?|=|?|_p$  be the (normalized) absolute value on $\Q_p$ associated with the additive valuation ${\rm ord}$ such that
$ |x|_{p} = p^{-{\rm ord}(x)}$ for $x \not =0$ and $|0|=0$ by convention.

The space $\Zp $ is equipped with the natural probability measure $\mu_p,$ which is normalized so that $\mu_p(\Z_p)=1.$
Elementary $\mu_p$-measurable sets are $p$-adic balls by which we mean a set $a+ p^k\Z_p$ of radius $p^{-k}$ for $a \in \Z_p.$ We define the volume of this ball as $\mu_p(a+ p^k\Z_p)=1/p^k.$

A $p$- adic dynamical system on $\Z_p$ is understood as a triple $(\Z_p, \mu_p, f),$ where $f:\Z_p \rightarrow \Z_p$ is a measurable function.
Starting with any chosen point $x_0$ (an initial point), the trajectory of $f$ is a sequence of elements of the form
$$ x_0, x_1=f(x_0), \cdots, x_{i}=f(x_{i-1})=f^i(x_0)\cdots.$$

Here we say that $f$ is bijective modulo $p^n$ for a positive integer $n$ if a sequence of $p^n$ elements
$ x_0, x_1=f(x_0), \cdots, f^{p^n-1}(x_0)$ is distinct in the factor ring $\Zp/p^n\Z_p.$ And $f$ is said to be transitive modulo $p^n$ if the above sequence forms a single cycle in $\Zp/p^n\Z_p.$
We say that a function $f : {\Z_p} \rightarrow {\Z_p}$ of the measurable space ${\Z_p}$
 with the Haar measure $\mu=\mu_p$ is {\em measure-preserving}  if
 $\mu(f^{-1}(S))=\mu(S)$ for each measurable subset $S \subset {\Z_p}.$
 A measure-preserving function $f: \Z_p \rightarrow {\Z_p}$ is said to be {\em ergodic} if it has no proper
 invariant subsets. That is,
 if $f^{-1}(S) =S$ for a measurable subset, then $S \subset {\Z_p}$ implies that
 $\mu(S)=1$ or $\mu(S)=0.$
We say that $f: \Z_p\rightarrow \Z_p $ is 1-Lipschitz (or compatible) if  for all $x,y \in  \Z_p,$
$$|f(x)-f(y)|_{p} \leq |x-y|_{p}.$$  Note that a 1-Lipschitz function $f$ is continuous on $\Z_p.$
We observe that the 1-Lipschitzness condition has several equivalent statements:

(i) $|f(x+y)-f(x)|_{p} \leq |y|_{p}$ for all $x,y \in \Z_p;$

(ii) $|\frac{1}{y}(f(x+y)-f(x))|_{p} \leq 1$ for all $x \in \Z_p$ and all $ y \not =0 \in  \Z_p;$

(iii) $ f(x+p^{n}\Z_p) \subset f(x)+ p^{n}\Z_p$ for all $x\in  \Z_p$ and any integer $n\geq1;$

(iv) $f(x) \equiv f(y) \pmod{p^n}$ whenever $x \equiv y \pmod{p^n}$ for any integer $n\geq1.$

For later use, We recall the following criteria for the measure-preservation and ergodicity of a 1-Lipschitz function:
\begin{proposition}{\bf \cite{An1,DP}}\label{mper}
 Let  $f: \Z_p\rightarrow \Z_p $ be a 1-Lipschitz function.

\noindent(i) The following are equivalent:

(1) $f$ is measure-preserving;

(2) $f$ is bijective modulo $p^n$ for all integers $n>0;$

(3) $f$ is an isometry, i.e., $|f(x)-f(y)|_{p} = |x-y|_{p}$ for all $x,y \in \Z_p.$

\noindent(ii) $f$ is ergodic if and only if it is transitive modulo $p^n$ for all integers $n>0$.
\end{proposition}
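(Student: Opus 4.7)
The plan is to exploit condition (iii) in the list of equivalents preceding the proposition: a 1-Lipschitz function sends each ball $a + p^n\Z_p$ into the ball $f(a) + p^n\Z_p$, and therefore descends to a well-defined self-map $\bar f_n$ of the finite ring $\Z_p / p^n\Z_p$ for every $n \geq 1$. The measure $\mu_p$ is the inverse limit of the uniform probabilities on these finite quotients, so every statement about balls of radius $p^{-n}$ can be translated into a combinatorial statement about $\bar f_n$.

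For part (i) I will run (1)$\Rightarrow$(2)$\Rightarrow$(3)$\Rightarrow$(1). For (1)$\Rightarrow$(2), the preimage $f^{-1}(a + p^n\Z_p)$ is a disjoint union of balls of radius $p^{-n}$, one for each residue $b$ with $\bar f_n(b) = a$, so measure preservation forces exactly one such $b$, giving bijectivity of $\bar f_n$. For (2)$\Rightarrow$(3), if $|x-y|_p = p^{-k}$ then $|f(x)-f(y)|_p \leq p^{-k}$ by 1-Lipschitz; strict inequality would yield $\bar f_{k+1}(x) = \bar f_{k+1}(y)$, contradicting the injectivity of $\bar f_{k+1}$ together with $x \not\equiv y \pmod{p^{k+1}}$. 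For (3)$\Rightarrow$(1), an isometry carries each ball $a + p^n\Z_p$ bijectively onto $f(a) + p^n\Z_p$, so $\mu_p$ is preserved on the algebra of finite unions of balls, and the identity then extends to the full Borel $\sigma$-algebra by Carathéodory's theorem.

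For part (ii), the ``only if'' direction is by contrapositive: if $f$ is ergodic (so measure-preserving, so $\bar f_n$ is a permutation by part (i)) but $\bar f_n$ decomposes into more than one cycle for some $n$, then the lift of any single cycle to the corresponding union of balls of radius $p^{-n}$ is a proper measurable $f$-invariant set of intermediate measure. The converse is the main technical point. Assuming transitivity of $\bar f_n$ for every $n$ and fixing a measurable $f$-invariant $S$, the isometric bijection between $a + p^n\Z_p$ and $f(a) + p^n\Z_p$, together with $f^{-1}(S) = S$, gives $\mu_p(S \cap (a + p^n\Z_p)) = \mu_p(S \cap (f(a) + p^n\Z_p))$; iterating along the $\bar f_n$-orbit of $a$ and invoking transitivity shows that this quantity is independent of $a$. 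Summing over the $p^n$ residue classes yields
$$\frac{\mu_p\bigl(S \cap (a + p^n\Z_p)\bigr)}{p^{-n}} \;=\; \mu_p(S) \quad \text{for every } a \text{ and every } n.$$
The martingale (equivalently, $p$-adic Lebesgue differentiation) theorem identifies the left-hand side, as $n \to \infty$, with $\mathbf{1}_S(x)$ at $\mu_p$-almost every $x$, and so $\mathbf{1}_S \equiv \mu_p(S)$ almost everywhere, forcing $\mu_p(S) \in \{0,1\}$.

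The subtlest step is the almost-everywhere convergence of the normalized averages to $\mathbf{1}_S$; this is where I would check the conventions used in the cited sources \cite{An1,DP} to make sure nothing beyond standard filtration convergence is required.
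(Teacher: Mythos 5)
Your proof is correct, but note that the paper offers no proof of this proposition at all: it is stated as a known result and attributed to \cite{An1,DP}, so there is nothing internal to compare against and your argument is a genuine addition. What you give is the standard self-contained derivation via the finite quotients $\Z_p/p^n\Z_p$: the cycle structure of the reductions $\bar f_n$ encodes both measure-preservation and ergodicity, and the chain (1)$\Rightarrow$(2)$\Rightarrow$(3)$\Rightarrow$(1) together with the orbit-averaging argument in (ii) is exactly the folklore proof underlying the cited sources. Two small points deserve a sentence each if you write this up. First, in (3)$\Rightarrow$(1) you assert that an isometry carries $a+p^n\Z_p$ \emph{onto} $f(a)+p^n\Z_p$; surjectivity of $f$ is not automatic from the isometry property alone and should be justified by the standard fact that an isometric self-map of a compact metric space is surjective (alternatively, deduce surjectivity mod $p^n$ from injectivity mod $p^n$ on the finite quotient). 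Second, the appeal to martingale convergence at the end of (ii) can be avoided entirely: once you know $\mu_p\bigl(S\cap(a+p^n\Z_p)\bigr)=\mu_p(S)\,p^{-n}$ for every ball, approximate $S$ by a finite union $U$ of level-$n$ balls with $\mu_p(S\,\triangle\,U)<\e$ to get $\mu_p(S\cap U)=\mu_p(S)\mu_p(U)$, and letting $\e\to 0$ yields $\mu_p(S)\le\mu_p(S)^2$, hence $\mu_p(S)\in\{0,1\}$; this keeps the whole proposition at the level of elementary measure theory on clopen sets. Neither issue is a gap in substance.
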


Throughout this paper, we denote the greatest integer that is less than or equal to a real number $a$ by $\lfloor a \rfloor.$

\subsection{Mahler  basis and ergodic functions on $\Z_2$}

It is well known \cite{Ma1,Ma2} that every continuous function $f:\Z_p \rightarrow \Z_p$  is represented by the Mahler interpolation series

\begin{eqnarray}\label{Maexp}
f(x)=\sum_{m= 0}^{\infty}a_m \binom{x}{m},
\end{eqnarray}
where $a_m\in \Zp$ for $m=0, \cdots $ and
the binomial coefficient functions are define by $$\binom{x}{m}=\frac{1}{m!}x(x-1) \cdots (x-m+1)~~(m\geq1),~~\binom{x}{0}=1.$$
We now state Anashin's characterization results for the measure-preservation and ergodicity of 1-Lipschitz functions in terms of the
coefficients of the Mahler expansion.
\begin{theorem}{\bf Anashin \cite{An1, AK}}\label{Ana-binom}

(i) The function $f$ in Eq.(\ref{Maexp}) is 1-Lipschitz on $\Z_p$  if and only if the following conditions are satisfied: For all $ m \geq 0,$
\begin{eqnarray*}\label{Mlipcon}
|a_m | \leq |p|^{\lfloor {\rm log}_{p} m \rfloor}.
\end{eqnarray*}

(ii)The function $f$ is a measure-preserving 1-Lipschitz function on $\Z_p$  whenever the following conditions are satisfied:
\begin{eqnarray*}
 |a_1| &=&1;\\
 |a_m | &\leq& |p|^{\lfloor {\rm log}_{p} m \rfloor +1}~ {\rm for~ all~} m \geq 2.
\end{eqnarray*}

(iii) The function $f$ is an ergodic 1-Lipschitz  function on $\Zp$ whenever the following conditions are satisfied:
\begin{eqnarray*}
 a_0  &\not \equiv& 0 \pmod{p};\\
  a_1  &\equiv& 1 \pmod{p}; \\
 a_m &\equiv& 0 \pmod {p^{\lfloor {\rm log}_{p} (m+1) \rfloor +1}}  ~{\rm  for ~all}~ m\geq 2.
\end{eqnarray*}

(iv)  The function $f$ is an ergodic 1-Lipschitz  function on $\Z_2$ if and only if the following conditions are satisfied:
\begin{eqnarray*}
 a_0  & \equiv& 1 \pmod{2};\\
a_1 & \equiv& 1  \pmod{4};\\
 a_m &\equiv& 0 \pmod {2^{\lfloor {\rm log}_{2} (m+1) \rfloor +1}}  ~{\rm  for ~all}~ m\geq 2.
\end{eqnarray*}

\end{theorem}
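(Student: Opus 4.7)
The plan is to establish part (i) first, which characterizes the 1-Lipschitz property by a $p$-adic bound on the Mahler coefficients, and then to derive parts (ii)--(iv) by combining part (i) with Proposition~\ref{mper}: this reduces measure-preservation and ergodicity to bijectivity and single-cycle transitivity of the map induced by $f$ on $\Z/p^n\Z$ for every $n \geq 1$.

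For part (i), I would work with the forward difference operator $\Delta f(x) = f(x+1) - f(x)$, which satisfies $\Delta \binom{x}{m} = \binom{x}{m-1}$, whence $a_m = (\Delta^m f)(0)$. Vandermonde's identity gives
\begin{equation*}
f(x+h) - f(x) = \sum_{m \geq 1} a_m \sum_{k=0}^{m-1} \binom{x}{k}\binom{h}{m-k}.
\end{equation*}
The key $p$-adic estimate is $\mathrm{ord}_p\!\bigl(\binom{h}{j}\bigr) \geq \mathrm{ord}_p(h) - \mathrm{ord}_p(j) \geq \mathrm{ord}_p(h) - \lfloor \log_p j \rfloor$ for $h \in p\Z_p$ and $1 \leq j < p^{\mathrm{ord}_p(h)}$, which follows from Kummer's theorem applied to the prototype $h = p^n$. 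Sufficiency of the coefficient bound is then immediate: if $|h|_p \leq p^{-n}$, the hypothesis $|a_m|_p \leq p^{-\lfloor \log_p m \rfloor}$ forces every summand of the displayed identity to have valuation at least $n$, so $|f(x+h)-f(x)|_p \leq |h|_p$. For necessity, one extracts $a_m$ via $a_m = \sum_{k=0}^{m}(-1)^{m-k}\binom{m}{k} f(k)$ and controls $\mathrm{ord}_p(a_m)$ using the 1-Lipschitz estimate on successive differences.

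For parts (ii) and (iii), I would truncate the Mahler series modulo $p^n$: by part (i), only finitely many Mahler terms contribute modulo $p^n$, so the measure-preservation (resp.\ ergodicity) of $f$ reduces to the bijectivity (resp.\ single-cycle transitivity) of an induced polynomial map on $\Z/p^n\Z$. The coefficient hypotheses in (ii) make the truncation congruent modulo $p$ to the affine map $x \mapsto a_0 + a_1 x$ with $a_1 \in \Z_p^{\times}$, and one checks by induction on $n$ that the higher-order corrections do not spoil bijectivity. The stronger congruences in (iii)---$a_0 \not\equiv 0$, $a_1 \equiv 1 \pmod p$, and $p^{\lfloor \log_p(m+1)\rfloor + 1} \mid a_m$ for $m \geq 2$---make the induced map congruent modulo a controlled power of $p$ to $x \mapsto a_0 + x$, which is a single $p^n$-cycle; one then verifies by induction on $n$ that the contributions of Mahler coefficients $a_m$ with $m \in [p^{n-1}, p^n)$ preserve the cycle structure when lifting from $\Z/p^n\Z$ to $\Z/p^{n+1}\Z$.

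Part (iv) is the subtlest, since it asserts the converse for $p = 2$. The ``if'' direction is a refinement of (iii) tailored to $p = 2$. The main obstacle is the ``only if'' direction: I would argue by induction on $n$, assuming $f$ is transitive modulo $2^{n+1}$ and deducing that the stated congruences must hold. The heart of the argument is a sharp tracking of the contribution of each single Mahler term $a_m \binom{x}{m}$ with $m < 2^n$ to the induced permutation on $\Z/2^{n+1}\Z$: if the stated $2$-adic divisibility of some $a_m$ were to fail, the change in the iterate $f^{2^n}(x) - x$ modulo $2^{n+1}$ would not be compatible with a single $2^{n+1}$-cycle, yielding a contradiction. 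The hard point is that for odd primes the analogous necessity fails, so one must genuinely exploit the fact that $(\Z/2^n\Z)^{\times}$ has a very particular $2$-group structure; this is why an iff characterization is available only for $p = 2$.
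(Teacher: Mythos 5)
Your outline stalls at exactly the points where the theorem is hard, and it also misses the much shorter structural arguments that make (ii) and (iii) essentially one-liners. (Note that the paper itself cites (i)--(iii) from Anashin and only re-proves (iv).) For (i), sufficiency via the Vandermonde expansion and the estimate $\mathrm{ord}_p\bigl(\binom{h}{j}\bigr)\geq \mathrm{ord}_p(h)-\mathrm{ord}_p(j)$ is fine, and necessity, though only gestured at, is standard. For (ii), however, the truncation-plus-induction route is both unproved as stated and unnecessary: the hypotheses say precisely that $f(x)=a_0+a_1x+p\,h(x)$ with $h$ 1-Lipschitz and $|a_1|=1$, so $f$ is an isometry and Proposition~\ref{mper}(i)(3) finishes immediately. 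For (iii), the sentence ``one then verifies by induction on $n$ that the contributions \dots preserve the cycle structure when lifting'' is the entire difficulty and is not carried out; the intended argument is again structural: since $\Delta\binom{x}{m+1}=\binom{x}{m}$, the congruences $p^{\lfloor\log_p(m+1)\rfloor+1}\mid a_m$ (for $m\geq 2$) together with $a_1\equiv 1\pmod p$ exhibit $f(x)=a_0+x+p\,\Delta g(x)$ with $g$ 1-Lipschitz, and Lemma~\ref{dxperg} gives ergodicity with no induction at all.

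For (iv) the gap is fatal as written. The ``only if'' direction cannot be obtained by ``tracking the contribution of each single Mahler term $a_m\binom{x}{m}$ to $f^{2^n}(x)-x$'': iteration mixes all the coefficients nonlinearly, so there is no way to attribute a failure of transitivity modulo $2^{n+1}$ to the failure of one congruence $2^{\lfloor\log_2(m+1)\rfloor+1}\mid a_m$ without an explicit formula or estimate for $f^{2^n}(x)-x$ modulo $2^{n+1}$ in terms of the $a_m$, which you do not supply. The paper takes a different route entirely: it first proves, via the van der Put machinery (Lemma~\ref{p=2equi}, Theorem~\ref{mpeqp2}, Theorems~\ref{eprop} and~\ref{ayresult}), the structural characterization of Corollary~\ref{1+x}, namely that $f$ is ergodic on $\Z_2$ if and only if $f(x)=1+x+2\Delta g(x)$ for some 1-Lipschitz $g$; part (iv) then follows by writing $g=\sum_m c_m\binom{x}{m}$ with $|c_m|\leq |2|^{\lfloor\log_2 m\rfloor}$ and reading off $a_0=1+2c_1$, $a_1=1+2c_2$, $a_m=2c_{m+1}$ for $m\geq 2$. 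If you want to avoid that detour, you need some substitute for Corollary~\ref{1+x} (e.g.\ Anashin's Boolean-coordinate criterion, Theorem 4.39 of \cite{AK}); the induction you sketch does not by itself produce one. Your closing remark that necessity fails for odd $p$ because of the structure of $(\Z/2^n\Z)^{\times}$ is also not an argument that your induction closes for $p=2$.
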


Anashin's proof of Theorem \ref{Ana-binom} (iv) relies on a criteria, namely Theorem 4. 39 in  \cite{AK}, based on the algebraic normal form of Boolean functions which determines the  measure-preservation and ergodicity of 1-Lipschitz
functions. The tricky part of his proof is to use this criterion to derive a recursive formula for the coefficients of Boolean coordinates
of a 1-Lipschitz function $f.$  As an easy corollary of this theorem, Anashin \cite{AK} derives the following
result, which turns out to be a useful method for constructing measure-preserving (ergodic) 1-Lipschitz
functions out of an arbitrary 1-Lipschitz function. Here recall that $\Delta$ is the difference operator defined by $ \Delta f(x) =f(x+1)-f(x).$
\begin{corollary}\label{1+xDE}
Every ergodic (resp. every measure-preserving)  1-Lipschitz function $f:\Z_2 \rightarrow \Z_2$ can
be represented as $f(x)= 1+x +2\Delta g(x)$
(resp. as $f(x)=d +x +2 g(x)$) for a suitable constant $d \in \Z_2$ and a suitable1-Lipschitz function $g:\Z_2 \rightarrow \Z_2 $
and vice versa, and every function $f$ of the above form is an ergodic (thus, measure-preserving) 1-Lipschitz function.
\end{corollary}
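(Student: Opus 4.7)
The plan is to read off the Mahler expansion of the putative function $g$ by formal coefficient comparison and then verify the resulting $g$ is $1$-Lipschitz using Theorem \ref{Ana-binom}(i). Recall the key identity $\Delta \binom{x}{m} = \binom{x}{m-1}$ for $m\ge 1$ and $\Delta \binom{x}{0}=0$. So if we write $g(x)=\sum_{m\ge 0} b_m \binom{x}{m}$, then
\[
1+x+2\Delta g(x) \;=\; 1 + \binom{x}{1} + 2\sum_{m\ge 0} b_{m+1}\binom{x}{m},
\]
and $d+x+2g(x) \;=\; (d+2b_0) + (1+2b_1)\binom{x}{1} + \sum_{m\ge 2}2b_m \binom{x}{m}$.

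For the ergodic direction, start with $f=\sum a_m\binom{x}{m}$ satisfying the congruences of Theorem \ref{Ana-binom}(iv). Set $b_0=0$ and for $m\ge 1$ define
\[
b_m \;=\; \frac{a_{m-1}-\varepsilon_m}{2}, \qquad \varepsilon_1=1,\ \varepsilon_2=1,\ \varepsilon_m=0\ (m\ge 3),
\]
so that $1+x+2\Delta g(x)$ has exactly the Mahler coefficients $a_m$. The ergodicity congruences — $a_0\equiv 1\pmod 2$, $a_1\equiv 1\pmod 4$, and $a_m\equiv 0\pmod{2^{\lfloor\log_2(m+1)\rfloor+1}}$ for $m\ge 2$ — are precisely what make $b_m\in\Z_2$ with $|b_m|\le |2|^{\lfloor\log_2 m\rfloor}$: for $m=1$ we get $b_1\in\Z_2$; for $m=2$ we get $b_2\in 2\Z_2=|2|^{\lfloor\log_2 2\rfloor}\Z_2$; and for $m\ge 3$ we get $|b_m|=|a_{m-1}|/|2|\le |2|^{\lfloor\log_2(m)\rfloor}$ since $\lfloor\log_2 m\rfloor=\lfloor\log_2((m-1)+1)\rfloor$. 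Thus $g$ is $1$-Lipschitz by Theorem \ref{Ana-binom}(i). The measure-preserving direction is analogous but simpler: take $d=a_0$, $b_0=0$, $b_1=(a_1-1)/2$, and $b_m=a_m/2$ for $m\ge 2$; the conditions $|a_1|=1$ and $|a_m|\le |2|^{\lfloor\log_2 m\rfloor+1}$ from Theorem \ref{Ana-binom}(ii) are exactly equivalent to $|b_m|\le |2|^{\lfloor\log_2 m\rfloor}$.

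Conversely, given any $1$-Lipschitz $g$, writing out the Mahler coefficients of $f=1+x+2\Delta g$ (respectively $f=d+x+2g$) using the formulas above and the bound $|b_m|\le |2|^{\lfloor\log_2 m\rfloor}$ immediately yields the congruences of Theorem \ref{Ana-binom}(iv) (respectively (ii)), so $f$ is ergodic (respectively measure-preserving). There is no real obstacle here; the only thing to be careful about is the off-by-one indexing in the two identities $|b_{m+1}|\le |2|^{\lfloor\log_2(m+1)\rfloor}$ versus $a_m\equiv 0\pmod{2^{\lfloor\log_2(m+1)\rfloor+1}}$, which must be checked separately at the boundary cases $m=0,1,2$ before bundling the generic case $m\ge 3$.
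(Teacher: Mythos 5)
Your derivation is correct in its mechanics and takes a genuinely different route from the one the paper actually writes down. The paper never proves Corollary \ref{1+xDE} where it is stated (it is quoted from \cite{AK}); its own proof appears later as Corollary \ref{1+x} and runs entirely through the van der Put basis: the measure-preserving part via the isometry property (Proposition \ref{mper}(i)(3)) and the necessity of the conditions of Theorem \ref{MPinp} for $p=2$ (Proposition \ref{MPinp2}), and the ergodic part via Lemma \ref{dxperg} together with Theorems \ref{ayresult} and \ref{eprop}. You instead work with the Mahler basis, using $\Delta\binom{x}{m}=\binom{x}{m-1}$ to translate the representation $f=1+x+2\Delta g$ into an exact reindexing of Mahler coefficients, $a_0=1+2b_1$, $a_1=1+2b_2$, $a_m=2b_{m+1}$ for $m\ge 2$; your boundary checks at $m=0,1,2$ are right, and since Theorem \ref{Ana-binom}(iv) is an \emph{iff}, the ergodic equivalence is airtight. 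This is essentially the ``easy corollary of Theorem \ref{Ana-binom}'' that the paper attributes to Anashin, and it is shorter than the paper's route; what the paper's route buys is independence from Theorem \ref{Ana-binom}(iv), which the paper needs because it later uses the corollary to \emph{reprove} (iv) --- if your argument were substituted into the paper's logical chain, Section 3.4 would become circular.

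There is one genuine gap, in the measure-preserving direction. To show that every measure-preserving $1$-Lipschitz $f$ can be written as $d+x+2g$, you need the implication ``$f$ measure-preserving $\Rightarrow$ $|a_1|=1$ and $|a_m|\le|2|^{\lfloor\log_2 m\rfloor+1}$,'' i.e.\ the \emph{necessity} of the conditions in Theorem \ref{Ana-binom}(ii). But the paper states (ii) only as a sufficient condition (``whenever''), for general $p$; you cannot read the converse off the statement you cite. The necessity does hold for $p=2$ (it is part of Anashin's theorem), but you must either invoke that explicitly or prove it. The cheapest repair avoids Mahler altogether: by Proposition \ref{mper}(i)(3), a measure-preserving $1$-Lipschitz $f$ on $\Z_2$ is an isometry, so $f(x)-f(y)$ and $x-y$ are both $2^k$ times a unit when $|x-y|=2^{-k}$, hence $f(x)-f(y)\equiv x-y\pmod{2^{k+1}}$; therefore $g(x):=\tfrac{1}{2}\bigl(f(x)-x-f(0)\bigr)$ maps $\Z_2$ to $\Z_2$ and satisfies $|g(x)-g(y)|\le|x-y|$, which gives the representation $f=d+x+2g$ with $d=f(0)$ directly. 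The ergodic direction of your proof needs no such repair.
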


In this paper, using the van der Put basis, we re-prove  this corollary and use it to
provide an alternative proof of Theorem \ref{Ana-binom} (iv).

For later use, we recall Lemma 4. 41 in \cite{AK}, from which we deduce one of the main results: Theorem 3. 8.
 \begin{lemma}\label{dxperg}
 Given a 1-Lipschitz function $g:\Z_p \rightarrow \Z_p$ and a $p$-adic integer $d \not \equiv 0 \pmod{p}$, the
 function $f(x)= d + x +p\Delta g(x)$ is ergodic.
 \end{lemma}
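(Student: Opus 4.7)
My plan is to verify ergodicity through Proposition \ref{mper}(ii), which reduces the problem to showing that $f$ is transitive modulo $p^n$ for every $n \geq 1$; I will prove this by induction on $n$. First one checks that $f$ is 1-Lipschitz, since $x\mapsto x$ is an isometry and $p\Delta g$ is 1-Lipschitz with values in $p\Z_p$. The base case $n=1$ is immediate: $f(x) \equiv x + d \pmod p$, and because $d$ is a unit in $\F_p$, the iterates of any starting point cycle through $\{0,d,2d,\ldots,(p-1)d\}\pmod p$, yielding a single cycle.

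The algebraic engine of the inductive step is the identity
\[
f^k(x) = x + kd + p\sum_{i=0}^{k-1}\Delta g(f^i(x)),
\]
which follows by an immediate induction on $k$. Setting $k = p^n$ and assuming $f$ is transitive modulo $p^n$, the residues $\{f^i(x)\bmod p^n\}_{i=0}^{p^n-1}$ form a permutation of $\Z/p^n\Z$, and since $g$ is 1-Lipschitz, $\Delta g(y)\pmod{p^n}$ depends only on $y\bmod p^n$. Therefore I may reindex the sum modulo $p^n$:
\[
\sum_{i=0}^{p^n-1}\Delta g(f^i(x)) \equiv \sum_{y=0}^{p^n-1}\bigl(g(y+1)-g(y)\bigr) = g(p^n) - g(0) \equiv 0 \pmod{p^n},
\]
where the final congruence is another application of 1-Lipschitzness of $g$. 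Multiplying by $p$, I obtain
\[
f^{p^n}(x) \equiv x + p^n d \pmod{p^{n+1}},
\]
and the right-hand side differs from $x$ modulo $p^{n+1}$ because $d$ is a unit modulo $p$.

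Since the last congruence holds for every $x\in\Z_p$, iterating $p$ further times yields $f^{p^{n+1}}(x) \equiv x \pmod{p^{n+1}}$ for all $x$. Hence the induced map $\bar f$ on $\Z/p^{n+1}\Z$ is a bijection, and the cycle of $x$ under $\bar f$ has length dividing $p^{n+1}$ but strictly greater than $p^n$; it must therefore equal $p^{n+1}$, which is transitivity modulo $p^{n+1}$ and completes the induction.

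The main obstacle I anticipate is the displayed telescoping congruence: it requires simultaneously invoking the inductive hypothesis (to reindex the sum as one over a full set of residues modulo $p^n$) and the 1-Lipschitz property of $g$ (both to justify the reindexing and to collapse $g(p^n) - g(0)$ to $0$ modulo $p^n$). Once this cancellation is established, lifting transitivity from modulus $p^n$ to modulus $p^{n+1}$ is essentially combinatorial.
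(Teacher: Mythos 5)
Your proof is correct. The paper itself does not prove this lemma --- it simply imports it as Lemma 4.41 of \cite{AK} --- and your argument (induction on $n$ via the identity $f^{p^n}(x) = x + p^n d + p\sum_{i=0}^{p^n-1}\Delta g(f^i(x))$, with the sum telescoping to $g(p^n)-g(0)\equiv 0 \pmod{p^n}$ after reindexing over a complete residue system, so that $f^{p^n}(x)\equiv x + p^n d \not\equiv x \pmod{p^{n+1}}$) is exactly the standard proof of that cited result, with all the needed justifications (1-Lipschitzness to reindex, the cycle-length divisibility argument to lift transitivity) supplied.
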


\subsection{Van der Put basis and ergodic functions on $\Z_2$ }

We introduce a sequence of the van der Put basis $\chi(m,x)$ on the ring $\Zp$ of $p$-adic integers.
For an integer $m>0$ and $x \in \Zp,$ we
define
\[ \chi(m,x)   =
                   \left \{ \begin{array}{ll}
1       & if ~|x-m| \leq p^{ -\lfloor {\rm log}_{p}(m)\rfloor  -1};\\
0   & ~otherwise
 \end{array}
 \right. \]
and
\[ \chi(0,x)   =
                   \left \{ \begin{array}{ll}
1       & ~if |x| \leq p^{-1};\\
0   & ~otherwise.
 \end{array}
 \right. \]

Indeed, the van der Put basis is a characteristic function of the balls $B_{p^{ -\lfloor {\rm log}_{p}(m)\rfloor  -1}}(m)~(m\geq 1)$ and $B_{1/p}(0).$
By the well-known result of van der Put \cite{vPut}(see also \cite{Ma2,S}), we know that
every continuous function $f:\Z_p \rightarrow \Z_p$  is represented by the van der Put series:
\begin{eqnarray}\label{putseries}
f(x)
=\sum_{m=0}^{\infty}B_{m}\chi(m,x),
\end{eqnarray}
where $B_m\in \Zp$ for $m=0, \cdots.$
We write an integer $m>0$ in the $p$-adic form as
$$ m =m_0+m_1p+ \cdots + m_sp^s (m_s \not =0)$$
From the $p$-adic representation of $m,$ we see that
$$ s = \lfloor {\rm log}_{p}(m)\rfloor = ({ \rm  the~number~of~ digits~ in ~the~ p-adic~ form~ of~ m} ) - 1$$
by assuming that $ \lfloor {\rm log}_{p} 0\rfloor =0.$
Throughout this paper, we set $$q(m)=m_sp^s,~~ m\underline{~~} =m-q(m).$$
Then we have $m=m\underline{~~} +q(m).$
What is important here is that
the expansion coefficients $\{B_{m} \}_{m\geq0}$ can be recovered by the following formula:
\begin{eqnarray}\label{Bmform}
B_m   =
                   \left \{ \begin{array}{ll}
f(m)-f(m-q(m)) =f(m)-f(m\underline{~~})       & ~if ~m\geq p;\\
f(m)   & ~otherwise.
 \end{array}
 \right.
\end{eqnarray}

As a result parallel to Theorem \ref{Ana-binom}, we state the following characterization for the ergodicity of  a  1- Lipschitz function $f$ in terms of the van der Put expansion.
Indeed,  Yurova \cite{Yu} and Anashin et al \cite{AKY1,AKY2} deduce Theorem \ref{putcri2} from Corollary \ref{1+xDE}.
However, in Section 3.4 we provide an alternate proof of it independently of Theorem \ref{Ana-binom}.

\begin{theorem}{\bf Yurova \cite{Yu} and Anashin et al \cite{AKY1,AKY2}}\label{putcri2}

(i) The function $f$ in Eq.(\ref{putseries}) is 1-Lipschitz on $\Z_p$  if and only if
the following conditions are satisfied: For all $ m \geq 0,$
$$ |B_m | \leq |p|^{\lfloor {\rm log}_{p} m \rfloor}.$$

(ii) The 1-Lipschitz function $f$ on $\Z_2$ represented by the van der Put series
\begin{eqnarray}\label{putseries2}
f(x)=b_0\chi(0,x) + \sum_{m=1}^{\infty}2^{\lfloor {\rm log}_{2}m\rfloor }b_m\chi(m,x)~~(b_m \in \Z_2)
\end{eqnarray}
is measure-preserving  on $\Z_2$ if and only if

 (1) $b_0 + b_1 \equiv 1 \pmod{2};$

 (2) $|b_m| =1   ~{\rm  for ~all}~ m\geq 2.$

(iii) The 1- Lipschitz function $f$ represented by the van der Put series
 in Eq.(\ref{putseries2})
is ergodic on $\Z_2$  if and only if the following conditions are satisfied:

(1) $b_0 \equiv 1 \pmod{2};$

(2) $ b_0 +b_1 \equiv 3 \pmod{4};$

(3) $b_2 +b_3 \equiv 2 \pmod{4};$

(4) $|b_m| =1$ for all $m\geq2;$

(5) $\sum_{m=2^{n-1}}^{2^n -1}b_m \equiv 0 \pmod{4}$ for all $n\geq 3.$

\end{theorem}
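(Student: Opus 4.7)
The plan is to prove parts (i), (ii), (iii) of Theorem \ref{putcri2} in that order, working directly from the van der Put expansion without invoking Theorem \ref{Ana-binom}.

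Part (i) is straightforward from the recovery formula (\ref{Bmform}). Since $q(m) = m_s p^s$ is divisible by $p^{\lfloor \log_p m \rfloor}$, the identity $B_m = f(m) - f(m\underline{~~})$ combined with the 1-Lipschitz property of $f$ yields $|B_m| \leq p^{-\lfloor \log_p m \rfloor}$ for $m \geq p$; for $m < p$ the bound is automatic. Conversely, under this coefficient bound, whenever $x \equiv y \pmod{p^n}$, one has $\chi(m,x) = \chi(m,y)$ for all $m < p^n$, while $|B_m| \leq p^{-n}$ for $m \geq p^n$, so each term of the series contributes nothing modulo $p^n$ and $f(x) \equiv f(y) \pmod{p^n}$.

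For part (ii), I argue by induction on $n$ using Proposition \ref{mper}. The base case is $f(0) = b_0$ and $f(1) = b_1$, so bijectivity mod 2 is equivalent to $b_0 + b_1 \equiv 1 \pmod 2$, i.e. condition (1). For the inductive step, assuming bijectivity mod $2^n$, by 1-Lipschitz the values $f(0), \ldots, f(2^{n+1} - 1)$ split into pairs $\{f(k), f(k+2^n)\}$ sharing a residue mod $2^n$, and formula (\ref{Bmform}) gives $f(k+2^n) - f(k) = B_{k+2^n} = 2^n b_{k+2^n}$; the pair is distinct mod $2^{n+1}$ iff $b_{k+2^n}$ is a unit. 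Letting $k$ range over $\{0, \ldots, 2^n-1\}$ and $n \geq 1$ yields condition (2).

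For part (iii), condition (1) comes from transitivity mod 2 forcing $b_0 \equiv 1 \pmod 2$, and (4) is exactly measure-preservation (ii)(2). The remaining conditions (2), (3), (5) come from iteratively enforcing transitivity: given transitivity mod $2^n$, transitivity mod $2^{n+1}$ is equivalent to $f^{2^n}(0) \equiv 2^n \pmod{2^{n+1}}$, since the orbit of 0 in $\Z/2^{n+1}\Z$ projects onto the $2^n$-cycle of $\Z/2^n\Z$ and hence has length either $2^n$ or $2^{n+1}$. The key computation is the orbit-sum identity
\[
f^{2^n}(0) \equiv \sum_{j=0}^{2^n - 1}\bigl(f(j) - j\bigr) \pmod{2^{n+1}}.
\]
Writing $x_i = f^i(0) = j_i + 2^n t_i$ with $j_i = x_i \bmod 2^n$ and $f(x_i) = f(j_i) + 2^n u_i$, the two sides differ by $2^n \sum_i (u_i - t_i)$; expanding $f(j + 2^n t) - f(j)$ via the van der Put series shows that modulo $2^{n+1}$ only the block of coefficients indexed by $m \in \{2^n, \ldots, 2^{n+1} - 1\}$ contributes (terms with $m < 2^n$ cancel by ball size, and $|B_m| \leq 2^{-n-1}$ for $m \geq 2^{n+1}$), yielding $u_i \equiv b_{j_i + 2^n}\,t_i \equiv t_i \pmod 2$ since $b_{j_i + 2^n}$ is odd by measure-preservation. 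Interchanging summation,
\[
\sum_{j=0}^{2^n - 1} f(j) = 2^{n-1} b_0 + \sum_{m=1}^{2^n - 1} 2^{\lfloor \log_2 m \rfloor} b_m \cdot 2^{n - \lfloor \log_2 m \rfloor - 1} = 2^{n-1}\sum_{m=0}^{2^n - 1} b_m,
\]
so the step condition becomes $\sum_{m=0}^{2^n - 1} b_m \equiv 2^n + 1 \pmod 4$; for $n=1$ this is (2), for $n=2$ subtracting (2) gives (3), and for $n \geq 3$ (where $2^n \equiv 0 \pmod 4$) subtracting the inductive hypothesis at $n-1$ yields (5). The main obstacle will be the orbit-sum identity, specifically $u_i \equiv t_i \pmod 2$, which rests on both the block structure of the van der Put coefficients affecting $f$ modulo $2^{n+1}$ on a coset $j + 2^n \Z_2$ and the oddness of $b_m$ from measure-preservation; once this is established, the telescoping down to (2), (3), (5) is routine.
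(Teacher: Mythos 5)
Your proposal is correct, and for the substantive parts it takes a genuinely different route from the paper. Part (i) matches the paper's Proposition \ref{lips} essentially verbatim. For part (ii) the paper proves sufficiency by a contradiction argument on the minimal differing digit (Theorem \ref{MPinp}) and necessity via the isometry property (Proposition \ref{MPinp2}); your pairing induction on the levels $\Z/2^{n+1}\Z$, using $f(k+2^n)-f(k)=2^n b_{k+2^n}$, gives both directions at once and is cleaner, though the content is the same. The real divergence is in part (iii). The paper splits the two directions: sufficiency is obtained by showing the conditions force $f(x)=B_0+x+2\Delta g(x)$ (Theorem \ref{eprop}) and then invoking Lemma \ref{dxperg} (Lemma 4.41 of \cite{AK}), while necessity goes through the digit-sum parity criterion of Lemma \ref{p=2equi} ($f$ transitive mod $2^{n+1}$ iff $S_n$ odd) combined with Lemma \ref{mpid}, Proposition \ref{Mp-sumform} and Theorem \ref{mpeqp2}. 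Your orbit-sum identity $f^{2^n}(0)\equiv\sum_{j=0}^{2^n-1}(f(j)-j)\pmod{2^{n+1}}$ replaces all of this machinery: I checked the key step, namely that $f(j+2^n t)-f(j)\equiv 2^n b_{j+2^n}(t\bmod 2)\pmod{2^{n+1}}$ (only the block $2^n\le m<2^{n+1}$ survives, and exactly one characteristic function in that block can switch on), so that $u_i\equiv t_i\pmod 2$ by oddness of $b_{j+2^n}$, and the reduction of the step condition to $\sum_{m=0}^{2^n-1}b_m\equiv 2^n+1\pmod 4$, whose telescoping yields exactly (2), (3), (5). This is an equivalence at each level $n$, so it handles both directions simultaneously, is entirely self-contained (no appeal to the $d+x+p\Delta g$ decomposition or to Lemma 4.41 of \cite{AK}), and is morally a cleaner repackaging of the ``parity of crossings'' idea underlying the paper's Lemma \ref{p=2equi}. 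What the paper's longer route buys is material that generalizes to odd $p$ (Theorems \ref{axb}, \ref{eprop}, \ref{mpeqp2}), which is the stated goal of the article; your argument as written is specific to $p=2$.
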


\section{ Ergodic  $p$-adic maps on $\Z_p$}
In this section, which is divided into four subsections, we present the main results of this paper. We first re-prove the  1-Lipschitz property of $p$-adic functions  represented by the van der Put series and then
provide the sufficient conditions for the measure-preservation of such functions. Using the latter conditions and Corollary \ref{dxperg}, we
provide several conditions for coefficients under which 1-Lipschitz functions on $\Z_p$ are ergodic. In addition, we present several equivalent conditions for the van Put coefficients for
$p$-adic functions. We use these equivalent conditions for $p=2$  to provide an alternate proof of
Anashin et al.'s criterion in \cite{AKY1,AKY2}, that is, Theorem \ref{putcri2} (iii).
Finally, using this fact, we provide a simple proof of  Anashin's criterion in \cite{An1}, that is, Theorem \ref{Ana-binom} (iv).

\subsection{Measure-preserving 1-Lipschitz functions on $\Z_p$}

We provide the  necessary and sufficient conditions for $f$ to be 1-Lipschitz in terms of the coefficients of the van der Put expansion. This result is known \cite{AKY2}, but we provide a simple proof.

\begin{proposition}\label{lips}
Let $f(x)=\sum_{n=0}^{\infty}B_{m}\chi(m,x):\Z_p \rightarrow \Z_p$  be a continuous function  represented
by the van der Put series.
Then $f$ is 1-Lipschitz if and only if  $|B_m| \leq p^{- \lfloor {\rm log}_{p}m\rfloor}$ for all nonnegative integers $m.$
\end{proposition}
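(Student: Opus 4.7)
The plan is to treat the two directions symmetrically: we use the recovery formula (\ref{Bmform}) for necessity, and verify the equivalent characterization (iv) of the 1-Lipschitz property for sufficiency.

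For necessity, suppose $f$ is 1-Lipschitz. When $0 \le m < p$, we have $\lfloor \log_p m \rfloor = 0$ and $B_m = f(m) \in \Zp$, so $|B_m|_p \le 1$ is automatic. For $m \ge p$, we write $m = m_0 + m_1 p + \cdots + m_s p^s$ with $m_s \neq 0$, so $s = \lfloor \log_p m \rfloor$ and $q(m) = m_s p^s$. Since $m_s \in \{1, \dots, p-1\}$ is coprime to $p$, $|q(m)|_p = p^{-s}$. The recovery formula $B_m = f(m) - f(m - q(m))$ combined with the 1-Lipschitz condition gives
$$|B_m|_p \le |q(m)|_p = p^{-\lfloor \log_p m \rfloor}.$$

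For sufficiency, we verify property (iv): assuming $x \equiv y \pmod{p^n}$, we show $f(x) \equiv f(y) \pmod{p^n}$. The key observation is that the basis function $\chi(m,\cdot)$ is the indicator of a $p$-adic ball of radius $p^{-(\lfloor \log_p m \rfloor + 1)}$, where the $m=0$ case corresponds to the ball $p\Zp$. Whenever $\lfloor \log_p m \rfloor + 1 \le n$, this ball has radius at least $p^{-n}$, and since $x$ and $y$ already lie in a common ball of radius $p^{-n}$, we obtain $\chi(m, x) = \chi(m, y)$. Such terms cancel in
$$f(x) - f(y) = \sum_{m=0}^{\infty} B_m\bigl(\chi(m, x) - \chi(m, y)\bigr),$$
leaving only terms with $\lfloor \log_p m \rfloor \ge n$, each of which satisfies $|B_m|_p \le p^{-\lfloor \log_p m \rfloor} \le p^{-n}$ by hypothesis. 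The non-Archimedean triangle inequality then yields $|f(x) - f(y)|_p \le p^{-n}$.

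We do not expect any substantial obstacle; the argument is essentially bookkeeping. The only technical points to check are the special treatment of $\chi(0,\cdot)$ (whose associated ball is $p\Zp$) and the pointwise convergence of the van der Put series under the stated coefficient bound: at any $x \in \Zp$ and any level $s \ge 0$, at most one $m$ with $\lfloor \log_p m \rfloor = s$ satisfies $\chi(m,x) = 1$, and the bound $|B_m|_p \le p^{-s}$ then ensures $p$-adic convergence of the partial sums.
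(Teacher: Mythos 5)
Your proof is correct and follows essentially the same route as the paper: necessity via the recovery formula $B_m=f(m)-f(m-q(m))$ together with $|q(m)|_p=p^{-\lfloor\log_p m\rfloor}$, and sufficiency by noting that $\chi(m,x)=\chi(m,y)$ for all $m<p^n$ when $x\equiv y\pmod{p^n}$, so only terms with $|B_m|_p\le p^{-n}$ survive. The extra remarks on the ultrametric ball picture and on convergence of the series are sound but not needed beyond what the paper already does.
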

\begin{proof}
Assuming that $f$ is 1-Lipschitz, by the formula for $B_m$ in Eq.(\ref{Bmform}) we compute the following for $m\geq p:$
$$|B_m| =|f(m)-f(m-q(m))| \leq |q(m)| = p^{- \lfloor {\rm log}_{p}m\rfloor}.$$
Then the result follows by noting that the inequality holds trivially for $0 \leq m <p.$

Conversely, assuming that the inequality holds, we first observe that
if $ x\equiv y \pmod{p^n},$ then $\chi(m,x) =\chi(m,y)$ for all $0 \leq m <p^n.$
Then, under the assumption that $ x\equiv y \pmod{p^n},$
we compute $$f(x)-f(y) =\sum_{m=0}^{\infty}B_m(\chi(m,x)-\chi(m,y))\equiv \sum_{m=0}^{p^n-1}B_m(\chi(m,x)-\chi(m,y))
\equiv 0 \pmod{p^n},$$where the last congruence follows from the observation.
Therefore, the result follows.
\end{proof}

We now provide the sufficient conditions for a  1- Lipschitz function $f$ on $\Z_p$ to be measure-preserving.

\begin{theorem}\label{MPinp}
The 1- Lipschitz function $f(x)
=\sum_{n=0}^{\infty}B_{m}\chi(m,x):\Z_p \rightarrow \Z_p$
is measure-preserving whenever the following conditions are satisfied:

(1) $\{B_0, B_1, \cdots, B_{p-1}\}$ is distinct modulo $p;$

(2) $B_m \equiv q(m) \pmod{p^{\lfloor {\rm log}_{p}m\rfloor  +1}}$ for all $m\geq p.$
\end{theorem}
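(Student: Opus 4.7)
The plan is to invoke Proposition~\ref{mper}(i) and prove by induction on $n \ge 1$ that $f$ is bijective modulo $p^n$. A preliminary truncation observation is the linchpin: for any integer $x$ with $0 \le x < p^n$ and any integer $m \ge p^n$, one has $\chi(m,x) = 0$. Indeed, $\chi(m,x) = 1$ would require $x \equiv m \pmod{p^{\lfloor \log_p m \rfloor + 1}}$, but $x$ and $m$ are distinct integers both lying in $[0, p^{\lfloor \log_p m \rfloor + 1})$. Hence for such $x$, the van der Put series collapses to the finite sum
$$f(x) = \sum_{m=0}^{p^n - 1} B_m\,\chi(m, x).$$

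For the base case $n = 1$, unwinding the definition of $\chi$ gives $\chi(m, x) = \delta_{m, x}$ for $0 \le m, x \le p - 1$, so $f(x) = B_x$ on this range, and condition~(1) is precisely the statement that $f$ is bijective modulo $p$. For the inductive step from $n - 1$ to $n$, I would write every $x \in \{0, \ldots, p^n - 1\}$ uniquely as $x = x\underline{~~} + x_{n-1}\,p^{n-1}$ with $x\underline{~~} = x - q(x) \in \{0, \ldots, p^{n-1} - 1\}$ and $x_{n-1} \in \{0, \ldots, p-1\}$, and split the finite sum above at $p^{n-1}$. In the lower range $m < p^{n-1}$, the test $\chi(m, x) = 1$ only involves the lowest $\lfloor \log_p m \rfloor + 1 \le n - 1$ digits of $x$, so $\chi(m, x) = \chi(m, x\underline{~~})$; hence the lower partial sum equals $f(x\underline{~~})$. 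In the upper range $p^{n-1} \le m \le p^n - 1$ one has $\lfloor \log_p m \rfloor = n - 1$, and $\chi(m, x) = 1$ forces $m = x$, so the upper partial sum equals $B_x$ when $x \ge p^{n-1}$ and vanishes otherwise. Invoking condition~(2) at the level $\lfloor \log_p x \rfloor + 1 = n$ then yields
$$f(x) \equiv f(x\underline{~~}) + x_{n-1}\,p^{n-1} \pmod{p^n},$$
a congruence that is uniformly valid for all $x_{n-1} \in \{0, \ldots, p-1\}$ (trivially so when $x_{n-1} = 0$).

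To close the induction, I would combine this congruence with the 1-Lipschitz property $f(x) \equiv f(x\underline{~~}) \pmod{p^{n-1}}$ and the induction hypothesis, which makes $x\underline{~~} \mapsto f(x\underline{~~}) \bmod p^{n-1}$ a bijection on $\{0, \ldots, p^{n-1} - 1\}$. As $x\underline{~~}$ ranges over its domain, the residues $f(x\underline{~~}) \bmod p^{n-1}$ exhaust $\Z/p^{n-1}\Z$; for each fixed $x\underline{~~}$, the $p$ choices of $x_{n-1}$ translate $f(x\underline{~~})$ through the $p$ distinct residues of the coset of $p^{n-1}\Z/p^n\Z$ sitting above $f(x\underline{~~}) \bmod p^{n-1}$. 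Thus the $p^n$ values $f(x) \bmod p^n$ are all distinct, proving bijectivity modulo $p^n$.

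The main obstacle I anticipate is the truncation and splitting bookkeeping in the inductive step, in particular the clean identification of which $\chi(m, x)$ actually contribute in the top interval $[p^{n-1}, p^n - 1]$. Once this is established, condition~(2) is exactly tailored to produce the required translation by $x_{n-1}\,p^{n-1}$, and the rest of the induction becomes a straightforward combinatorial observation.
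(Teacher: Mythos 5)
Your proof is correct, and it takes a genuinely different route from the paper's. The paper argues by contradiction: assuming $f$ is not injective modulo $p^n$, it picks $a\not\equiv b \pmod{p^n}$ with $f(a)\equiv f(b)$, locates the minimal index $r$ at which their digits differ, truncates to $m_1,m_2$, and plays the ultrametric inequality $|f(m_1)-f(m_2)|\leq p^{-(r+1)}$ (from the 1-Lipschitz property) against condition (2), which forces $B_{m_1}-B_{m_2}\equiv (a_r-b_r)p^r \not\equiv 0 \pmod{p^{r+1}}$. You instead run a direct induction on $n$, using the truncation of the van der Put series on $\{0,\dots,p^n-1\}$ to re-derive the recovery formula $f(x)=f(x\,\bmod\,p^{n-1})+B_x$ for $x\geq p^{n-1}$ and hence the explicit congruence $f(x)\equiv f(x\,\bmod\,p^{n-1})+x_{n-1}p^{n-1} \pmod{p^n}$; bijectivity modulo $p^n$ then falls out of the coset-lifting picture. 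Your version is arguably cleaner: it is constructive (it exhibits exactly how $f$ acts on the top digit), it avoids the paper's case split ``we can assume $a_r\neq 0$ and $b_r\neq 0$,'' and it does not need the 1-Lipschitz inequality as a separate ingredient since the needed congruence modulo $p^{n-1}$ already follows from your displayed formula. One small notational caveat: you define $x\underline{~~}=x-q(x)$ but then use it to mean the truncation $x\,\bmod\,p^{n-1}$; these agree only when $x\geq p^{n-1}$, and in the case $x_{n-1}=0$ you implicitly switch to $x\underline{~~}=x$. The intent is clear and the argument is unaffected, but the definition should be stated as truncation to the lowest $n-1$ digits.
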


\begin{proof}

By Proposition \ref{mper}, it suffices to show  that $f$ is bijective modulo $p^n$ for every positive integer $n.$ Because  $\Z_p/p^n\Z_p$ is a finite set,
it is also equivalent to showing that $f$ is injective modulo $p^n.$
 Suppose that $f$ is not injective modulo $p^n$ for some integer $n>0.$ Then we observe
 $n\geq 2$ because $f$ is bijective modulo $p$, by assumption (1).
Here we see that there exist $a$ and $b$ in $\Z_p/p^n\Z_p$ with
 $a \not \equiv b \pmod{p^n}$ such that $f(a)\equiv f(b) \pmod{p^{n}}.$
Write
\begin{eqnarray*}
a &=& a_0 +a_1p+ \cdots +a_{n-1}p^{n-1}~~{\rm with}~~0 \leq a_i <p,  \\
b &=& b_0 +b_1p+ \cdots +b_{n-1}p^{n-1}~~{\rm with}~~  0 \leq b_i <p .
 \end{eqnarray*}

Since $a \not \equiv b \pmod{p^n}, $ these exists a nonnegative integer $r$ such that $a_r \not =b_r,$ for which we may
assume that $r$ is the minimal index (thus $r\leq n-1$.).
Set
\begin{eqnarray*}
m_1 &=& a_0 +a_1p+ \cdots +a_{r}p^{r}, \\
m_2 &=& b_0 +b_1p+ \cdots +b_{r}p^{r}.
 \end{eqnarray*}
We can assume that $a_r \not =0$ and $b_r \not =0.$ Otherwise, the following argument can be applied in a similar fashion.
Because $f$ is 1-Lipschitz, we first deduce the following inequality:
\begin{eqnarray*}
|f(m_1)-f(m_2)| &=&|f(m_1)-f(a)+f(a)-f(b) +f(b)-f(m_2)|\\
&\leq& {\rm max} \{ |f(m_1)-f(a)|,|f(a)-f(b)|, |f(b)-f(m_2)|  \}\\
&\leq& |p|^{r+1}.
 \end{eqnarray*}
Then we have $B_{m_1} =f(m_1)-f(m_1\underline{~~})$ and $B_{m_2} =f(m_2)-f(m_2\underline{~~}).$
Since $m_1\underline{~~} =m_2\underline{~~},$ the preceding inequality yields
$$B_{m_1}-B_{m_2} =f(m_1)-f(m_2) \equiv 0 \pmod{p^{r+1}}.$$
On the other hand, by assumption (2), we have
$$ B_{m_1}-B_{m_2} \equiv q(m_1)-q(m_2)=(a_r-b_r)p^r \pmod{p^{r+1}}.$$ Because $a_r \not =b_r,$ the preceding congruence
gives $ B_{m_1} -B_{m_2} \not \equiv 0 \pmod{p^{r+1}}.$ Therefore, we have a contradiction.

\end{proof}
We note that condition (1) in Theorem \ref{MPinp} is well known to be equivalent to the following congruence(see Lemma 7.3. in \cite{LN}): For any prime $p>2,$
$$ \sum_{m=0}^{p-1}{B_m}^k \equiv
                \left \{ \begin{array}{ll}
0    \pmod{p}   & ~if ~ 0\leq k \leq p-2;\\
-1   \pmod{p}   & ~if ~ k =p-1.
 \end{array}
 \right.
 $$
For the converse of Theorem \ref{MPinp}, we have the following
\begin{proposition}\label{MPinp2}
 Let $f(x)
=\sum_{n=0}^{\infty}B_{m}\chi(m,x):\Z_p \rightarrow \Z_p$
be a measure-preserving 1- Lipschitz function.
Then we have the following:

(1) $\{B_0, B_1, \cdots, B_{p-1}\}$ is distinct modulo $p$.

(2) $|B_m| =|q(m)|=|p|^{\lfloor {\rm log}_{p}m\rfloor }$ for all $m\geq p$

\end{proposition}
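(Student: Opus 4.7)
The plan is to derive both parts directly from the characterization of measure-preserving 1-Lipschitz maps in Proposition \ref{mper}, combined with the recovery formula (\ref{Bmform}) for the van der Put coefficients. The work has already been done: Proposition \ref{mper}(i) tells us that a 1-Lipschitz $f$ is measure-preserving if and only if it is bijective modulo $p^n$ for every $n$, equivalently an isometry. These two reformulations match the two assertions of the proposition almost verbatim.

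For part (1), I would note that by (\ref{Bmform}) we have $B_m = f(m)$ whenever $0 \le m \le p-1$. Since $f$ is measure-preserving, Proposition \ref{mper}(i) gives that $f$ is bijective modulo $p$; applied to the complete residue system $\{0, 1, \ldots, p-1\}$ this yields that $f(0), f(1), \ldots, f(p-1)$ are pairwise distinct modulo $p$, which is exactly the assertion on $B_0, \ldots, B_{p-1}$.

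For part (2), fix $m \ge p$ and invoke (\ref{Bmform}) in the form $B_m = f(m) - f(\underline{m})$, where $\underline{m} = m - q(m)$. By the isometry characterization in Proposition \ref{mper}(i)(3),
\[
|B_m| \;=\; |f(m) - f(\underline{m})|_p \;=\; |m - \underline{m}|_p \;=\; |q(m)|_p \;=\; |p|^{\lfloor \log_p m \rfloor},
\]
which is precisely the desired equality.

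There is essentially no obstacle: the whole proof is a direct appeal to Proposition \ref{mper} through the recovery formula. The only subtlety worth flagging is that this necessary condition on $|B_m|$ is strictly weaker than the sufficient congruence $B_m \equiv q(m) \pmod{p^{\lfloor \log_p m\rfloor + 1}}$ in Theorem \ref{MPinp}; for $p = 2$ the two match (an element of $\Z_2$ of $2$-adic valuation equal to $\lfloor \log_2 m\rfloor$ is automatically congruent to $q(m)$ modulo the next power of $2$), which is why Theorem \ref{putcri2}(ii) becomes an ``if and only if''. For $p > 2$ the gap between Theorem \ref{MPinp} and Proposition \ref{MPinp2} indicates that neither statement gives a full characterization of the measure-preservation property, and this gap is the conceptual point to make after the proof rather than inside it.
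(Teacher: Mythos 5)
Your proof is correct and is essentially identical to the paper's: part (1) via bijectivity modulo $p$ from Proposition \ref{mper}, and part (2) by combining the isometry characterization with the recovery formula (\ref{Bmform}) to get $|B_m|=|f(m)-f(m\underline{~~})|=|q(m)|$. Your closing remark about why the valuation condition upgrades to the congruence condition only when $p=2$ matches the paper's own observation following the proposition.
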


\begin{proof}
It is easy to see that part (1) follows from Proposition \ref{mper}.
To deduce part (2), write $m\geq p$ as $m=m\underline{~~} +q(m).$
Becasue $f$ is a measure-preserving 1-Lipschitz function, by Proposition \ref{mper}(3) and  Eq.(\ref{Bmform}),
we have $$|B_m|=|f(m)-f(m\underline{~~})|=|m-m\underline{~~}|=|q(m)|,$$
which completes the proof.
\end{proof}

From Proposition \ref{MPinp2}, we see that the conditions in Theorem  \ref{MPinp} are necessary for the case in which $p=2,$ and therefore we provide an alternate proof
of Theorem \ref{putcri2} (ii).

\begin{proposition}\label{Mp-sumform}
 Let $f(x)
=\sum_{n=0}^{\infty}B_{m}\chi(m,x):\Z_p \rightarrow \Z_p$
be a measure-preserving 1- Lipschitz function. For $p^{n-1} \leq m \leq p^n -1~(n\geq2),$
set $$B_m=p^{n-1}b_m =p^{n-1}(b_{m0} +b_{m1}p +\cdots )~~(b_{m0} \not =0, 0 \leq b_{mi} \leq p-1, i=0,1 \cdots). $$
Then, for all $n\geq2,$ we have
$$\sum_{m=p^{n-1}}^{p^n-1}B_m \equiv \frac{1}{2}(p-1)p^{2n-1} +T_np^n \pmod{p^{n+1}},$$
where $T_n$ is defined by $T_n =\sum_{m=p^{n-1}}^{p^n-1}b_{m1}.$
\end{proposition}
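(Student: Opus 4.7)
The strategy is to isolate the contribution of the first two $p$-adic digits of each $b_m$. Since $f$ is measure-preserving and 1-Lipschitz, Proposition \ref{MPinp2} gives $|B_m| = p^{-(n-1)}$ on the range $p^{n-1}\le m\le p^n-1$, justifying the decomposition $B_m = p^{n-1}b_m$ with $b_{m0}\ne 0$. Expanding $b_m\equiv b_{m0}+pb_{m1}\pmod{p^2}$ and multiplying through by $p^{n-1}$, one gets
\[
\sum_{m=p^{n-1}}^{p^n-1} B_m \;\equiv\; p^{n-1}\Bigl(\sum_{m=p^{n-1}}^{p^n-1} b_{m0}\Bigr) \;+\; p^{n}T_{n} \pmod{p^{n+1}}.
\]
Hence the entire proposition reduces to proving the single combinatorial identity
\[
S_n \;:=\; \sum_{m=p^{n-1}}^{p^n-1} b_{m0} \;=\; \frac{(p-1)p^{n}}{2}.
\]

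To evaluate $S_n$ I plan to exploit the fiber structure of the permutation $\sigma_{n}$ of $\Z/p^{n}\Z$ induced by $f$ (a permutation by Proposition \ref{mper}). The 1-Lipschitz property ensures $\sigma_{n}$ covers $\sigma_{n-1}$ under reduction modulo $p^{n-1}$, so for each fixed $j\in\{0,\ldots,p^{n-1}-1\}$ the $p$ elements $\sigma_{n}(j+kp^{n-1})$ are precisely the $p$ lifts of $\sigma_{n-1}(j)$ to $\Z/p^{n}\Z$, arranged in some order. That is, there exists a permutation $\pi_j$ of $\{0,1,\ldots,p-1\}$ with
\[
\sigma_{n}\bigl(j+kp^{n-1}\bigr) \;\equiv\; \sigma_{n-1}(j) + \pi_j(k)\,p^{n-1} \pmod{p^{n}}, \qquad k=0,1,\ldots,p-1.
\]
For $m = j+kp^{n-1}$ with $k\ge 1$ one has $m\underline{~~}=j$, and reducing the formula $B_m = f(m)-f(m\underline{~~})$ modulo $p^{n}$ gives $B_m \equiv (\pi_j(k)-\pi_j(0))\,p^{n-1}\pmod{p^{n}}$. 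Dividing by $p^{n-1}$ then yields $b_{m0}\equiv \pi_j(k)-\pi_j(0)\pmod{p}$.

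The remaining count is immediate: as $k$ runs through $\{1,\ldots,p-1\}$, the values $\pi_j(k)$ run through $\{0,\ldots,p-1\}\setminus\{\pi_j(0)\}$, so the residues $\pi_j(k)-\pi_j(0)\pmod{p}$ hit each of $1,2,\ldots,p-1$ exactly once. Thus
\[
\sum_{k=1}^{p-1} b_{j+kp^{n-1},\,0} \;=\; 1+2+\cdots+(p-1) \;=\; \frac{p(p-1)}{2},
\]
independently of $j$. Summing over the $p^{n-1}$ values of $j$ produces $S_n = p^{n-1}\cdot\frac{p(p-1)}{2}=\frac{(p-1)p^{n}}{2}$ as an exact equality in $\Z$, and substituting back into the first display gives the claimed congruence.

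\textbf{Main obstacle.} The only genuinely new ingredient is the existence of the fiber permutations $\pi_j$, which follows from the bijectivity of $\sigma_n$ together with the compatibility $\sigma_{n}\equiv \sigma_{n-1}\pmod{p^{n-1}}$ forced by 1-Lipschitzness. Once $\pi_j$ is available, the argument reduces to a one-line digit count, so no further analytic or algebraic difficulty is expected.
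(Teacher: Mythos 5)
Your proof is correct and follows essentially the same route as the paper's: both fix the residue $j$ modulo $p^{n-1}$, show that the leading digits $b_{j+kp^{n-1},0}$ ($1\le k\le p-1$) form a permutation of $\{1,\dots,p-1\}$, and hence contribute $\tfrac{1}{2}p(p-1)$ per fiber, with the $b_{m1}$ terms collected into $T_n$. The only cosmetic difference is that you justify the permutation property via bijectivity of $f$ modulo $p^n$ combined with 1-Lipschitzness (your fiber permutations $\pi_j$), whereas the paper appeals to the isometry property from Proposition \ref{mper}; the two justifications are interchangeable here.
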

\begin{proof}
For given $m,$ write $m=ip^{n-1} +j$ with $ 1 \leq i \leq p-1,$ $0 \leq j\leq p^{n-1}-1$  and $n\geq 2.$
We show  that for any fixed $j,$ $\{ b_{ip^{n-1} +j,0}\}_{1 \leq i \leq p-1}$
is distinct, that is, a permutation of $1, \cdots, p-1.$
For such $j,$ we consider $B_{ip^{n-1} +j}$ for all $i=1, \cdots,p-1.$
Because $f$ is a measure-preserving 1- Lipschitz function, by Eq. (\ref{Bmform}) and Proposition \ref{mper} (3), we have the following for $ 1 \leq i,i' \leq p-1:$
$$B_{ip^{n-1} +j}-B_{i'p^{n-1} +j}=f(ip^{n-1} +j)-f(i'p^{n-1} +j)\equiv (i-i')p^{n-1} \pmod{p^n}.$$
From the definition of $B_m$ in the statement, we also have $$ B_{ip^{n-1} +j}-B_{i'p^{n-1} +j} \equiv (b_{ip^{n-1} +j,0}-b_{i'p^{n-1} +j,0})p^{n-1} \pmod{p^n}.$$
By equating these two congruence relations, we see that $i \not =i'$ if and only if $b_{ip^{n-1} +j,0} \not =b_{i'p^{n-1} +j,0},$ which implies the assertion.
Here, by using the assertion to compute the congruence
$$\sum_{m=p^{n-1}}^{p^n-1}B_m \equiv p^{n-1}\sum_{j=0}^{p^{n-1}-1}\sum_{i=1}^{p-1}b_{ip^{n-1} +j,0}+T_np^n \pmod{p^{n+1}},$$
we obtain the  desired result.
\end{proof}

\subsection{ Some conditions for ergodic functions on {$\Z_p$}}
In this subsection, we provide several conditions for $B_m$ under which a measure-preserving 1-Lipschitz function $f$ on $\Z_p$
is ergodic. Therefor, Anashin et al.'s result \cite{AKY1, AKY2} can be extended to a general case for a prime $p.$

To begin with, we have the connection between the van der Put expansions of  a continuous function $f$ and $\Delta f.$

\begin{proposition}\label{lipcond}
If a 1-Lipschitz (continuous) function $f=\sum_{m=0}^{\infty}{B_m}\c(m,x):\Z_p \rightarrow \Z_p$
is of the form
$f(x)= \Delta g(x)$ for some 1-Lipschitz function $g=\sum_{m=0}^{\infty}{\tilde B}_m\c(m,x),$ then
we have
\begin{eqnarray}
B_m &=& {\tilde B}_{m +1}-  {\tilde B}_{m}  ~\quad \quad \quad \quad \quad if ~0 \leq  m\leq p-2;\label{ls1}\\
 &=& {\tilde B}_{p} +{\tilde B}_{0}- {\tilde B}_{p-1}   ~\quad \quad \quad if~ m=p-1;\label{ls2}\\
 &=& {\tilde B}_{m+1}-{\tilde B}_{m}  ~\quad \quad \quad  if ~ m \not = p^{n-1}-1+m_{n-1}p^{n-1}, p^{n-1} \leq m \leq p^n-1,n\geq 2;\label{ls3}\\
 &=& {\tilde B}_{m+1}-{\tilde B}_{m}-{\tilde B}_{p^{n-1}} ~if~m = p^{n-1}-1+m_{n-1}p^{n-1}, 1 \leq m_{n-1} \leq p-1, n \geq 2. \label{ls4}
\end{eqnarray}
\end{proposition}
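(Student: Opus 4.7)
The plan is to evaluate $f$ and $\Delta g$ at successive integers and use the inversion formula (\ref{Bmform}) to read off the van der Put coefficients directly. Rearranged, (\ref{Bmform}) becomes the recursion $g(m) = \tilde B_m + g(m\underline{~~})$ for $m \geq p$ and $g(m) = \tilde B_m$ for $0 \leq m < p$, which unfolds any integer value of $g$ into a finite sum of $\tilde B$'s. Substituting $f(k) = g(k+1) - g(k)$ coming from $f = \Delta g$, every coefficient identity reduces to a comparison between two evaluations of $g$ that I can expand via this recursion.

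The low range is immediate. For $0 \leq m \leq p-2$, both $m$ and $m+1$ lie in $\{0,\ldots,p-1\}$, so $B_m = f(m) = \tilde B_{m+1} - \tilde B_m$, giving (\ref{ls1}). For $m = p-1$, I additionally unfold $g(p) = \tilde B_p + g(0) = \tilde B_p + \tilde B_0$, which produces the extra $\tilde B_0$ in (\ref{ls2}).

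For $p^{n-1} \leq m \leq p^n - 1$ with $n \geq 2$, I start from $B_m = f(m) - f(m\underline{~~})$, expand using $f = \Delta g$, and apply $g(m) - g(m\underline{~~}) = \tilde B_m$ to obtain the compact form $B_m = \bigl(g(m+1) - g(m\underline{~~}+1)\bigr) - \tilde B_m$. Identifying this difference is the whole business. If $(m_0, \ldots, m_{n-2}) \neq (p-1, \ldots, p-1)$, adding $1$ to $m$ does not carry past position $n-2$; the leading digit of $m+1$ is still $m_{n-1}$ and $(m+1)\underline{~~} = m\underline{~~}+1$, so $g(m+1) - g(m\underline{~~}+1) = \tilde B_{m+1}$, which yields (\ref{ls3}) at once.

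The main obstacle is the carry case $m = p^{n-1} - 1 + m_{n-1}p^{n-1}$, where $m+1 = (m_{n-1}+1)p^{n-1}$ forces $(m+1)\underline{~~}$ to collapse to $0$; there is an additional subtlety when $m_{n-1} = p-1$, since then $m+1 = p^n$ actually grows a digit, though $(m+1)\underline{~~} = 0$ still holds. Unfolding both endpoints by the recursion gives $g(m+1) = \tilde B_{m+1} + \tilde B_0$ and $g(m\underline{~~}+1) = g(p^{n-1}) = \tilde B_{p^{n-1}} + \tilde B_0$; the $\tilde B_0$'s cancel, leaving exactly the correction $-\tilde B_{p^{n-1}}$ that distinguishes (\ref{ls4}) from (\ref{ls3}). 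The only real difficulty of the proof is tracking what $(m+1)\underline{~~}$ is in each of these four regimes and not conflating the carry and no-carry behaviors.
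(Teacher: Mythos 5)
Your proof is correct and follows essentially the same route as the paper: both unfold $g$ at integer points via the inversion formula (\ref{Bmform}) and perform the identical carry/no-carry case analysis on $m+1$, the quantity $g(m+1)-g(m\underline{~~}+1)$ you isolate being exactly the van der Put coefficient of the shifted function $g(x+1)$ that the paper computes as an intermediate step. Your handling of the boundary subcase $m_{n-1}=p-1$ (where $m+1=p^n$ gains a digit but $(m+1)\underline{~~}=0$ still holds) is a point the paper glosses over, and it is treated correctly here.
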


\begin{proof}
For given $g(x)=\sum_{m=0}^{\infty}{\tilde B}_m\chi(m,x),$
write $g(x+1)=\sum_{m=0}^{\infty}{\bar B}_m\chi(m,x)$ in terms of the van der Put expansion.
We first need to determine the relationship between ${\bar B}_{m}'s$ and ${\tilde B}_m's.$
By Eq.(\ref{Bmform}), it is easy to see that for $ 0 \leq m < p-1,$
${\bar B}_{m} =g(m+1)= {\tilde B}_{m+1}$
and that  ${\bar B}_{p-1} =g(p)= {\tilde B}_{p} +{\tilde B}_{0}.$
Write $m$ in the $p$-adic form as
$m=m_0 +m_1p+ \cdots +m_{n-1}p^{n-1}$ with $ 0 \leq m_i <p,$ $m_{n-1} \not =0,$ and $n \geq 2.$
If $m\not = p^{n-1}-1 + m_{n-1}p^{n-1},$ then we have
$q(m+1)=q(m),$ and therefore, by Eq.(\ref{Bmform}), we again
have
$${\bar B}_m =g(m+1)-g(m+1 -q(m))= g(m+1)-g(m+1-q(m+1)) ={\tilde B}_{m+1}.$$
If $m =p^{n-1}-1 + m_{n-1}p^{n-1}\leq p^n-1$ with $  1 \leq m_{n-1} \leq p-1,$
then $q(m+1)=q(m)+p^{n-1},$ and therefore
we have
\begin{eqnarray*}
{\bar B}_m &=&g(m+1)-g(m+1 -q(m)) =g((m_{n-1}+1)p^{n-1})-g(p^{n-1})\\
&=& g((m_{n-1}+1)p^{n-1}) -g(0) -(g(p^{n-1}) -g(0)) \\
&=& {\tilde B}_{m+1} -{\tilde B}_{p^{n-1}}.
\end{eqnarray*}
The result follows by equating the coefficients of $f(x)$ and $\Delta g(x).$
\end{proof}

A natural question arising from Proposition \ref{lipcond} is under what conditions for coefficients of a 1-Lipschitz function $f$ we have $f$ of the form $f(x)=\Delta g(x)$ for a suitable 1-Lipschitz function $g.$
The following result answers this question:

\begin{proposition}
Let $f=\sum_{m=0}^{\infty}{B_m}\c(m,x):\Z_p \rightarrow \Z_p$ be a 1-Lipschitz function satisfying

(1) $\sum_{m=0}^{p-1}{B_m} \equiv 0 \pmod{p};$

(2) $\sum_{m=p^{n-1}}^{p^n -1} B_m \equiv 0 \pmod{p^{n}}$ for all $n\geq 2.$

Then there exists a 1-Lipschitz function $g(x)$ such that $f(x)=\Delta g(x).$
\end{proposition}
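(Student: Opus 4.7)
The strategy is to construct $g$ as a discrete antiderivative of $f$. Set $g(0)=0$ and $g(n)=\sum_{k=0}^{n-1}f(k)$ for every nonnegative integer $n$; this forces $\Delta g(n)=f(n)$ on $\Z_{\ge 0}$, so once we extend $g$ to a continuous function on $\Z_p$, the equality $\Delta g=f$ propagates from the dense set $\Z_{\ge 0}$ to all of $\Z_p$. The problem thus reduces to showing that the candidate van der Put coefficients
\[
\tilde B_m := g(m)-g(m\underline{~~})\quad(m\ge p),\qquad \tilde B_m := g(m)\quad(0\le m<p),
\]
satisfy the Lipschitz bound $|\tilde B_m|_p\le p^{-\lfloor\log_p m\rfloor}$ of Proposition \ref{lips}; the continuous extension is then $g(x):=\sum_m\tilde B_m\chi(m,x)$, which by construction agrees with the discrete antiderivative on $\Z_{\ge 0}$.

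For $m\ge p$, with $t=\lfloor\log_p m\rfloor$ and leading digit $m_t$, one has
\[
\tilde B_m=\sum_{k=m\underline{~~}}^{m\underline{~~}+m_tp^t-1}f(k).
\]
I would split this sum into $m_t$ consecutive blocks of length $p^t$. Inside each block the residues $k\bmod p^t$ form a permutation of $\{0,1,\dots,p^t-1\}$, so by the $1$-Lipschitz property of $f$ each block is $\equiv\sum_{j=0}^{p^t-1}f(j)\pmod{p^t}$. Consequently $\tilde B_m\equiv m_t\sum_{j=0}^{p^t-1}f(j)\pmod{p^t}$, and the argument reduces to the single congruence
\[
\sum_{j=0}^{p^t-1}f(j)\equiv 0\pmod{p^t}\qquad(t\ge 1).
\]

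For this congruence I would expand $f(j)$ in its van der Put series and interchange summation. A direct check from the definition of $\chi$ shows that for $j<p^t$ only coefficients $B_r$ with $r<p^t$ can contribute, and that the number of $j\in[0,p^t)$ with $\chi(r,j)=1$ is $p^{t-s}$ for $r\in[p^{s-1},p^s)$ (respectively $p^{t-1}$ when $r=0$). Collecting these multiplicities gives
\[
\sum_{j=0}^{p^t-1}f(j)=p^{t-1}\sum_{r=0}^{p-1}B_r+\sum_{s=2}^{t}p^{t-s}\sum_{r=p^{s-1}}^{p^s-1}B_r,
\]
in which hypothesis (1) makes the first term vanish modulo $p^t$ and hypothesis (2), applied at each level $s$, kills every summand of the second. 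Proposition \ref{lips} then yields that $g$ is $1$-Lipschitz, and $\Delta g=f$ on $\Z_p$ follows by continuity from $\Z_{\ge 0}$.

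The main obstacle is this double reduction: first from $\tilde B_m$ to the uniform partial sum $\sum_{j<p^t}f(j)\pmod{p^t}$ via the block argument, and then from that sum back to the coefficient-level hypotheses via a careful count of the supports of $\chi(r,\cdot)$ inside $[0,p^t)$. Everything else, namely producing the discrete antiderivative, translating it into van der Put coefficients, and extending by density, is routine; one could alternatively define $\tilde B_m$ directly from the recursions of Proposition \ref{lipcond}, but the partial-sum viewpoint makes the two hypotheses appear in their natural roles.
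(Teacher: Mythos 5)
Your proof is correct, and it takes a genuinely different route from the paper's. The paper works entirely at the level of coefficients: it treats the relations of Proposition \ref{lipcond} as a linear system in the unknowns ${\tilde B}_m$, solves it explicitly and inductively (e.g.\ ${\tilde B}_{ip^{n-1}+\a}=i{\tilde B}_{p^{n-1}}+\sum_{m=p^{n-1}}^{ip^{n-1}+\a-1}B_m$), and uses hypotheses (1) and (2) only to verify the critical congruences ${\tilde B}_{p^n}\equiv 0\pmod{p^n}$ at each level. You instead construct $g$ as the discrete antiderivative $g(n)=\sum_{k<n}f(k)$ and verify the bound $|{\tilde B}_m|\le p^{-\lfloor\log_p m\rfloor}$ of Proposition \ref{lips} by reducing, via your block decomposition, to the single congruence $\sum_{j<p^t}f(j)\equiv 0\pmod{p^t}$, which you then derive from (1) and (2) by counting the supports of the $\chi(r,\cdot)$ in $[0,p^t)$. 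The two constructions actually produce the same function (with ${\tilde B}_0=0$, the paper's recursive solution telescopes to exactly your partial sums, and your counting identity is an iterated form of the paper's Lemma \ref{mpid}), so the difference is in the verification rather than the object. Your version buys a transparent statement of what the hypotheses mean --- they are exactly what is needed to make the partial sums $\sum_{j<p^t}f(j)$ divisible by $p^t$ --- at the cost of invoking the reconstruction of a function from its van der Put coefficients on $\Z_{\ge 0}$ and a density argument; the paper's version stays closer to Proposition \ref{lipcond} and needs no passage through function values. Both are complete.
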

\begin{proof}
By Proposition \ref{lipcond}, we need to find a 1-Lipschitz function $g(x)=\sum_{m=p}^{\infty}{\tilde B}_{m}\c(m,x)$ whose coefficients ${\tilde B}_{m}$
satisfy a system of linear equations in Eqs.(\ref{ls1})-(\ref{ls4}). We view ${\tilde B}_{m}$ as the variables required for solving  a system of linear equations for
countably many variables ${\tilde B}_{m}$.  As in \cite{AKY2} for the case $p=2,$ we inductively construct a sequence of $p$-adic integers $\{{\tilde B}_{m}\}_{m\geq 0}$
with ${\tilde B}_{m} \equiv 0 \pmod{p^{\lfloor {\rm log}_{p}m\rfloor}}$ satisfying the above linear system.
From a system of linear equations in Eqs.(\ref{ls1}) and (\ref{ls2}),
we find $p$-adic integers ${\tilde B}_{0}, \cdots {\tilde B}_{p} \in \Z_p$ such that
 \begin{eqnarray*}
 {\tilde B}_{m}&=&{\tilde B}_{0} +\sum_{i=0}^{m-1}B_i~~~(m=1,\cdots p-1); \\
 {\tilde B}_{p} &=&\sum_{i=0}^{p-1}B_i.
 \end{eqnarray*}
We take ${\tilde B}_{0}\in \Zp$ arbitrarily and see that assumption (1) guarantees ${\tilde B}_{p}\equiv 0 \pmod{p}$ for the 1-Lipschitz property.
Given that ${\tilde B}_{p^{n-1}} \in \Zp$ with ${\tilde B}_{p^{n-1}} \equiv 0 \pmod{p^{n-1}}$ ($n\geq2$), from a system of linear equations in
Eqs.(\ref{ls3}) and (\ref{ls4}), we take
 $\{{\tilde B}_{m} \}_{m=p^{n-1}}^{p^n}$ with ${\tilde B}_{p^n} \equiv 0 \pmod{p^{n}}$
 such that  for all $\a =1,\cdots p^{n-1}-1,$
 \begin{eqnarray*}
{\tilde B}_{ip^{n-1} +\a} &=& i{\tilde B}_{p^{n-1}} +\sum_{m=p^{n-1}}^{ip^{n-1} +\a -1} B_m~~~(i=1,\cdots p-1);\\
{\tilde B}_{ip^{n-1}} &=& i{\tilde B}_{p^{n-1}} +\sum_{m=p^{n-1}}^{ip^{n-1}-1} B_m~~~(i=2,\cdots p).
 \end{eqnarray*}
We see that ${\tilde B}_{p^n} \equiv 0 \pmod{p^{n}}$ follows from assumption (2) and check  that
${\tilde B}_{m}~ ( p^{n-1} < m <p^n)$ satisfies the 1-Lipschitz property. This completes the proof.

\end{proof}

The first part of the following result is observed through Lemma 4.41 in \cite{AK}. However, the second part
provides a clue about  coefficient conditions for the ergodicity of 1-Lipschitz functions in terms of the van der Put expansion.

\begin{theorem}\label{axb}
Let $f(x)=\sum_{m=0}^{\infty}B_m \chi(m,x):\Z_p \rightarrow \Z_p$  be a  measure-preserving 1-Lipschitz function of
the form $f(x)= d+\e x + p\Delta g(x)$ for a suitable 1-Lipschitz function $g(x),$ where $\e \equiv 1 \pmod {p}$ and $d \not \equiv 0 \pmod{p}.$
Then (i) the function  $f$ is ergodic.

(ii) We have the following congruence relations:

(1)$B_0 \equiv s \pmod{p}$ for some $ 0 <s<p;$

(2) $\sum_{m=0}^{p-1}B_m  \equiv ps+ \frac{1}{2}(p-1)p\pmod{p^2};$

(3)
\[ \sum_{m=p}^{p^2-1}B_m   \equiv  \frac{1}{2}(p-1)p^3 \equiv \left \{ \begin{array}{ll}
4  \pmod{2^3} & if p=2; \\
 0  \pmod{p^3}    &  if p>2
;
\end{array}
 \right. \]

(4) $B_m \equiv q(m) \pmod{p^{\lfloor {\rm log}_{p}m\rfloor +1}}$ for all $m\geq p;$

(5) $\sum_{m=p^{n-1}}^{p^n -1} B_m  \equiv 0 \pmod{p^{n+1}}$ for all $n\geq 3.$

\end{theorem}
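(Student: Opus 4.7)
The plan is to derive (i) from Lemma~\ref{dxperg} and to verify the five congruences in (ii) by substituting the explicit form $f(x)=d+\e x+p\Delta g(x)$ into the recovery formula~(\ref{Bmform}) for the van der Put coefficients. The key identity behind (ii) is that, for $m\ge p$,
\begin{equation}\label{starexpl}
B_m=\e\,q(m)+p\bigl(\Delta g(m)-\Delta g(m\underline{~~})\bigr),
\end{equation}
obtained by subtracting $f(m\underline{~~})$ from $f(m)$ and using $m-m\underline{~~}=q(m)$.

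For part (i), I would write $\e=1+p\eta$ with $\eta\in\Z_p$, so that $f(x)=d+x+p\bigl(\eta x+\Delta g(x)\bigr)$. The aim is then to realise $\eta x+\Delta g(x)$ as $\Delta\tilde g(x)$ for some 1-Lipschitz $\tilde g$, at which point Lemma~\ref{dxperg} delivers ergodicity at once. The natural candidate is $\tilde g(x)=g(x)+\eta\binom{x}{2}$, since $\Delta\binom{x}{2}=x$; for odd $p$ the Mahler criterion (Theorem~\ref{Ana-binom}(i)) shows that $\binom{x}{2}$ is 1-Lipschitz on $\Z_p$, so $\tilde g$ is 1-Lipschitz and the argument concludes. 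For $p=2$, however, $\binom{x}{2}$ fails to be 1-Lipschitz, and a supplementary argument is required---either invoking the measure-preserving hypothesis through Theorem~\ref{putcri2}(ii), or running a direct lifting argument for transitivity modulo $2^n$. This will be the main obstacle.

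Conditions (1) and (4) of (ii) then drop out immediately: $B_0=f(0)\equiv d\pmod p$ gives (1), and in~(\ref{starexpl}) the second summand lies in $p^{\lfloor\log_p m\rfloor+1}\Z_p$ because $\Delta g$ is 1-Lipschitz and $|m-m\underline{~~}|=|q(m)|=p^{-\lfloor\log_p m\rfloor}$, while $\e\,q(m)\equiv q(m)\pmod{p^{\lfloor\log_p m\rfloor+1}}$ since $\e\equiv 1\pmod p$, proving (4).

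The remaining conditions (2), (3), (5) follow by summing over the relevant ranges and using telescoping. Over $0\le m\le p-1$ one has $\sum_{m=0}^{p-1}\Delta g(m)=g(p)-g(0)\in p\Z_p$, so modulo $p^2$
$$\sum_{m=0}^{p-1}B_m\equiv pd+\e\,\tfrac{p(p-1)}{2}\equiv ps+\tfrac{p(p-1)}{2}\pmod{p^2},$$
which is (2). For $n\ge 2$, parametrising $p^{n-1}\le m\le p^n-1$ as $m=ip^{n-1}+j$ with $1\le i\le p-1$, $0\le j\le p^{n-1}-1$ yields $q(m)=ip^{n-1}$, $m\underline{~~}=j$, and a direct computation shows
$$\sum_{m=p^{n-1}}^{p^n-1}q(m)=\tfrac{(p-1)p^{2n-1}}{2},\quad \sum_{m=p^{n-1}}^{p^n-1}\bigl(\Delta g(m)-\Delta g(m\underline{~~})\bigr)=g(p^n)-p\,g(p^{n-1})+(p-1)g(0),$$
the latter by telescoping first in $j$ and then in $i$. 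The 1-Lipschitz property of $g$ places this second sum in $p^n\Z_p$, so multiplying by $p$ places it in $p^{n+1}\Z_p$, and~(\ref{starexpl}) gives
$$\sum_{m=p^{n-1}}^{p^n-1}B_m\equiv\e\cdot\tfrac{(p-1)p^{2n-1}}{2}\pmod{p^{n+1}}.$$
For odd $p$ the right-hand side is divisible by $p^{n+1}$ for every $n\ge 2$, delivering both (3) (at $n=2$) and (5) (for $n\ge 3$). For $p=2$ it equals $\e\cdot 2^{2n-2}$, which is $4\pmod 8$ when $n=2$ and $0\pmod{2^{n+1}}$ when $n\ge 3$, producing the dichotomy in (3) and the remaining case of (5).
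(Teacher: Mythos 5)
Your treatment of part (ii) is, in substance, the paper's own argument: the paper obtains the same explicit expression for $B_m$ (via Proposition~\ref{lipcond} applied to $d+\e x+p\Delta g$ together with the van der Put expansions of $d$ and $x$) and then telescopes exactly as you do, so your identity $B_m=\e q(m)+p(\Delta g(m)-\Delta g(m\underline{~~}))$ and the computations for (1), (3), (4), (5) are correct and match the intended proof.

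The genuine gap is in part (i), and you have correctly located it but not closed it --- and it cannot be closed as stated. For odd $p$ your reduction $\tilde g=g+\eta\binom{x}{2}$ is clean and complete. For $p=2$, however, the statement itself fails under the stated hypothesis $\e\equiv 1\pmod 2$: take $g\equiv 0$, $d=1$, $\e=3$, so $f(x)=1+3x$; this is a measure-preserving $1$-Lipschitz function of the prescribed form, yet $0\mapsto 1\mapsto 4\equiv 0\pmod 4$, so $f$ is not transitive modulo $4$ and hence not ergodic. (Consistently, condition (ii)(2) also fails for this $f$: $B_0+B_1=1+4\equiv 1\not\equiv 3\pmod 4$; the culprit in your derivation of (2) is the step $\e\,\tfrac{p(p-1)}{2}\equiv\tfrac{p(p-1)}{2}\pmod{p^2}$, which for $p=2$ requires $\e\equiv 1\pmod 4$, not merely $\e\equiv 1\pmod 2$.) So the ``supplementary argument'' you defer does not exist; the hypothesis must be strengthened to $\e\equiv 1\pmod 4$ when $p=2$, after which your $\binom{x}{2}$ device again applies since $\eta$ is then even and $\eta\binom{x}{2}$ is $1$-Lipschitz on $\Z_2$. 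Note that the paper's own proof of (i) is only a citation of Lemma~4.41 of \cite{AK}, which, in the form reproduced as Lemma~\ref{dxperg}, covers only $\e=1$; your instinct that something extra is needed at $p=2$ is therefore sound, and the correct resolution is to repair the hypothesis rather than to find a missing argument.
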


\begin{proof}
It is known that the first assertion follows from Lemma 4.41 \cite{AK}.
For the second assertion,we first note that two simple functions, a constant $d \in \Z_p,$ and $x$ have an explicit expansion in terms of the van der Put series:
 \begin{eqnarray}\label{candx}
d &=& \sum_{m=0}^{p-1} d\chi(m,x); \nonumber \\
x &=& \sum_{m=1}^{p-1} m\chi(m,x)+ \sum_{m\geq p} q(m)\chi(m,x).
 \end{eqnarray}

If we write a 1-Lipschitz function $g(x)=\sum_{m=0}^{\infty}{\tilde B}_m \chi(m,x),$ then
we have from Proposition \ref{lipcond}
\[ B_m  = \left \{ \begin{array}{ll}
   d +\e m +p({\tilde B}_{m+1}-{\tilde B}_{m}) &  ~if~0\leq m \leq p-2;\\
   d+\e(p-1)+ p({\tilde B}_{p}+{\tilde B}_{0} -{\tilde B}_{p-1}) & ~if ~m=p-1;\\
 \e q(m) +p({\tilde B}_{m+1}- {\tilde B}_{m})  & ~if~ m  \not = p^{n-1}-1+m_{n-1}p^{n-1}, 1 \leq m_{n-1} \leq p-1, n \geq 2;\\
\e q(m) +p({\tilde B}_{m+1} -{\tilde B}_{m} -{\tilde B}_{p^{n-1}})  & ~if~ m = p^{n-1}-1+m_{n-1}p^{n-1}, 1 \leq m_{n-1} \leq p-1, n \geq 2.
\end{array}
 \right.
 \]
From these formulas for $B_m,$ it is now straightforward to deduce conditions (1)-(4) together with the assumptions about  $d$ and $\e.$
For condition (5), we have, for all $n \geq 3,$
\begin{eqnarray*}
\sum_{m=p^{n-1}}^{p^n -1} B_m & \equiv& \sum_{m=p^{n-1}}^{p^n -1} B_m -\e q(m)\pmod{p^{n+1}}\\
&=&\sum_{m=p^{n-1}}^{p^n -1}p(\tilde{B}_{m+1}-\tilde{B}_{m}) -p(p-1)\tilde{B}_{p^{n-1}} \\
&=&p\tilde{B}_{p^n}-p^2\tilde{B}_{p^{n-1}} \equiv 0 \pmod {p^{n+1}},
 \end{eqnarray*}
because $\tilde{B}_{m}$ satisfy the 1-Lipschitz property. This completes the proof.
\end{proof}

We provide a partial answer for the converse of Theorem \ref{axb} under some additional condition that is trivially satisfied for the case in which $p$= 2 or 3.
For the first main result,  we provide the sufficient conditions under which a measure-preserving
1-Lipschitz function on $\Z_p$ represented by the van der Put series is ergodic.
The conditions in Theorem \ref{axb} reduce to all conditions in Theorem \ref{putcri2} (iii) for the case  $p=2.$

\begin{theorem}\label{eprop}
Let $f(x)=\sum_{m=0}^{\infty}B_m \chi(m,x):\Z_p \rightarrow \Z_p$  be a  1-Lipschitz function satisfying
all conditions in Theorem \ref{axb} (ii).
If $f$ satisfies the additional condition $B_m \equiv B_0 + m \pmod{p}$ for $ 0 <m<p,$ then $f$ is ergodic.
\end{theorem}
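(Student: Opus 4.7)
The strategy is to realize $f$ in the form $f(x) = B_0 + x + p\Delta g(x)$ for a suitable 1-Lipschitz function $g:\Z_p \to \Z_p$. Since condition (1) of Theorem \ref{axb}(ii) forces $B_0 \not\equiv 0 \pmod p$, Lemma \ref{dxperg} then immediately yields the ergodicity of $f$.

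To construct such a $g$, I would first set $h(x) = p^{-1}\bigl(f(x) - B_0 - x\bigr)$. Using the explicit van der Put expansions of constants and of $x$ recorded in Eq.~\eqref{candx}, the van der Put coefficients of $h$ read $h_0 = 0$, $h_m = (B_m - B_0 - m)/p$ for $1 \leq m \leq p-1$, and $h_m = (B_m - q(m))/p$ for $m \geq p$. The extra hypothesis $B_m \equiv B_0 + m \pmod p$ makes $h_m$ a $p$-adic integer for $0 < m < p$, while condition (4) of Theorem \ref{axb}(ii) upgrades this to $|h_m| \leq p^{-\lfloor \log_p m \rfloor}$ for $m \geq p$, so by Proposition \ref{lips} the function $h$ is 1-Lipschitz.

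Next, I would apply the proposition immediately preceding Theorem \ref{axb} to write $h = \Delta g$ for some 1-Lipschitz $g$. Its two hypotheses translate into sum identities for the $h_m$: the first, $\sum_{m=0}^{p-1} h_m \equiv 0 \pmod p$, follows from condition (2) of Theorem \ref{axb}(ii) after recalling $s \equiv B_0 \pmod p$; the second, $\sum_{m=p^{n-1}}^{p^n-1} h_m \equiv 0 \pmod{p^n}$ for $n \geq 2$, splits according to $n$. For $n = 2$ one computes $\sum_{m=p}^{p^2-1} q(m) = \frac{1}{2}(p-1)p^3$ (respectively $4$ when $p = 2$) and matches this against condition (3). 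For $n \geq 3$ the analogous sum $\sum_{m=p^{n-1}}^{p^n-1} q(m)$ equals $\frac{1}{2}(p-1)p^{2n-1}$ for $p > 2$ and $2^{2n-2}$ for $p = 2$, both of which are already divisible by $p^{n+1}$, so condition (5) alone suffices. Assembling everything produces the desired decomposition $f(x) = B_0 + x + p\Delta g(x)$, and Lemma \ref{dxperg} closes the argument.

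The main bookkeeping obstacle is the $n = 2$ case: there $\sum q(m)$ is not yet divisible by $p^{n+1}$, which is precisely why condition (3) of Theorem \ref{axb}(ii) has been calibrated with the exact residual $\frac{1}{2}(p-1)p^3$ (with the separate value $4$ for $p = 2$) needed to cancel it, whereas for $n \geq 3$ the $\sum q(m)$ term vanishes modulo $p^{n+1}$ automatically and condition (5) does all the work. Verifying that these hypotheses interlock consistently across the $n = 2$ versus $n \geq 3$ regimes and across the $p = 2$ versus $p > 2$ bifurcation is a routine but careful $p$-adic valuation check.
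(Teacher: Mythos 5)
Your proposal is correct and follows essentially the same route as the paper: both extract the decomposition $f(x)=B_0+x+p\,\Delta g(x)$ by peeling off the van der Put expansions of $B_0$ and $x$, identify the residual coefficients (your $h_m$, the paper's $B_m''$), verify the sum congruences needed for the proposition guaranteeing $h=\Delta g$, and conclude via Lemma \ref{dxperg}. Your explicit evaluation of $\sum q(m)$ in the $n=2$ versus $n\geq 3$ cases is exactly the bookkeeping the paper leaves implicit.
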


\begin{proof}
By Lemma 4.41 in \cite{AK} or Lemma  \ref{dxperg} in Section 2, it suffices to show that the function $f$ is of
the form $f =B_0 + x + p\Delta g(x)$  with some  1-Lipschitz function $g(x).$
By Theorem \ref{MPinp}, we observe that $f$ is measure-preserving. Indeed, this follows from condition (4) in Theorem \ref{axb} and the additional condition.
We now use the said conditions and Eq.(\ref{candx}) to break  $f(x)$ up as follows:
\begin{eqnarray*}
 f(x) &=&\sum_{m=0}^{p-1}B_m \chi(m,x) +\sum_{m\geq p}( q(m)+ pB_m') \chi(m,x) ~ {\rm with } ~B_m' \equiv 0 \pmod{p^{\lfloor {\rm log}_{p}m\rfloor }} \\
& =& B_0\chi(0,x)+ \sum_{m=1}^{p-1}(B_0 + m)\chi(m,x) + \sum_{m\geq p} q(m)\chi(m,x) + p\sum_{m\geq 0}B_m^{''} \chi(m,x) \\
& =& B_0 + x +p\sum_{m\geq 0}B_m^{''}\chi(m,x)
\end{eqnarray*}

By equating the coefficients of $f$ on both sides of the preceding equation, we have
\[  B_m =
                  \left \{ \begin{array}{ll}
  B_0 + m +pB_m^{''}  & ~if ~0 \leq  m\leq p-1;\\
 q(m) +pB_m^{''}  & ~if~ m \geq p.
 \end{array}
 \right. \]
We use this equation to see that
condition (2) in Theorem \ref{axb} is equivalent to $\sum_{m=0}^{p-1}B_m^{''} \equiv 0 \pmod{p}$ and that conditions (5) and (3) are equivalent to $\sum_{m=p^{n-1}}^{p^n-1}B_m^{''} \equiv 0 \pmod{p^{n}}$
for all $n\geq 2.$
Because $~B_{m}'$ for $m\geq p$ satisfy  the 1-Lipschitz property, so do $B_m^{''}$   for $m\geq p.$ Therefore,
 we see from Proposition \ref{lipcond} that
$\sum_{m\geq 0}B_m^{''} \chi(m,x)=\Delta g(x)$ for some  1-Lipschitz function $g(x),$ and we are done.
\end{proof}

\subsection{Equivalent Statements}
We provide several equivalent conditions that may be needed for a complete description of the ergodicity of  1-Lipschitz functions on
$\Z_p.$
For this, we need to observe the following property for 1-Lipschitz functions.

\begin{lemma}\label{mpid}
Let  $f(x)=\sum_{m=0}^{\infty}B_m \chi(m,x):\Z_p \rightarrow \Z_p$  be a  1-Lipschitz function represented
by the van der Put series.
Then,  for all $n\geq 2,$ we have
$$\sum_{m=p^{n-1}}^{p^n-1}B_m= \sum_{m=0}^{p^n-1}f(m) -p\sum_{m=0}^{p^{n-1}-1}f(m).$$
\end{lemma}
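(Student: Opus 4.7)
The plan is to simply parametrize the sum on the left by the leading $p$-adic digit and then apply the recovery formula (\ref{Bmform}). Since $n\geq 2$, every index $m$ in the range $p^{n-1}\leq m\leq p^{n}-1$ satisfies $m\geq p$, so we may use $B_m=f(m)-f(m\underline{~~})$ with $m\underline{~~}=m-q(m)$.

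First I would write each such $m$ uniquely in the form $m=m_{n-1}p^{n-1}+j$ with $1\leq m_{n-1}\leq p-1$ and $0\leq j\leq p^{n-1}-1$; for this decomposition we have $q(m)=m_{n-1}p^{n-1}$ and $m\underline{~~}=j$. Substituting into (\ref{Bmform}) gives
\begin{equation*}
\sum_{m=p^{n-1}}^{p^n-1}B_m
=\sum_{m_{n-1}=1}^{p-1}\sum_{j=0}^{p^{n-1}-1}\bigl[f(m_{n-1}p^{n-1}+j)-f(j)\bigr].
\end{equation*}

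Next I would split the double sum into two pieces. The first piece, $\sum_{m_{n-1}=1}^{p-1}\sum_{j=0}^{p^{n-1}-1}f(m_{n-1}p^{n-1}+j)$, is exactly $\sum_{m=p^{n-1}}^{p^n-1}f(m)=\sum_{m=0}^{p^n-1}f(m)-\sum_{m=0}^{p^{n-1}-1}f(m)$. The second piece contributes $(p-1)\sum_{j=0}^{p^{n-1}-1}f(j)$ with a minus sign. Combining the two pieces yields $\sum_{m=0}^{p^n-1}f(m)-p\sum_{j=0}^{p^{n-1}-1}f(j)$, which is the desired identity.

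There is really no obstacle here: the argument is a bookkeeping exercise, and the 1-Lipschitz hypothesis is used only to ensure that the van der Put series converges pointwise to $f$ at the integers, so that (\ref{Bmform}) holds; no inequality estimates are required. The only point to watch is the hypothesis $n\geq 2$, which is what guarantees $m\geq p$ throughout the summation range so that (\ref{Bmform}) applies in the form $B_m=f(m)-f(m\underline{~~})$ rather than $B_m=f(m)$.
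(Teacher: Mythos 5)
Your proof is correct and is essentially the paper's own argument: both decompose each index as $m=ip^{n-1}+j$ with $1\leq i\leq p-1$, $0\leq j\leq p^{n-1}-1$, apply the recovery formula $B_m=f(m)-f(m\underline{~~})=f(m)-f(j)$, and count the $(p-1)$ copies of $\sum_{j}f(j)$. (A minor aside: the identity needs only that $f$ is continuous so that Eq.~(\ref{Bmform}) holds; the 1-Lipschitz hypothesis plays no role here.)
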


\begin{proof}
For $p^{n-1} \leq m < p^n$ with $n\geq 2,$ write $m=ip^{n-1} +j,$ where $1\leq i <p$ and $0 \leq j <p^{n-1}.$
We use the formula for $B_m$ in Eq.(\ref{Bmform}) to compute $\sum_{m=p^{n-1}}^{p^n-1}f(m)$ as follows:
\[
\begin{split}
\sum_{m=0}^{p^n -1}f(m)-\sum_{m=0}^{p^{n-1}-1}f(m) =\sum_{m=p^{n-1}}^{p^n-1}f(m)&=\sum_{m=p^{n-1}}^{p^n-1}B_m +f(m\underline{~~})\\
 &= \sum_{i=1}^{p-1}\sum_{j=0}^{p^{n-1}-1}B_{ip^{n-1}+j} +\sum_{i=1}^{p-1}\sum_{j=0}^{p^{n-1}-1}f(j)\\
 &= \sum_{m=p^{n-1}}^{p^n-1}B_m+(p-1)\sum_{m=0}^{p^{n-1}-1}f(m).
\end{split}
\]
Then we have the desired result.
\end{proof}

{\bf Remarks} 1. If the 1-Lipschitz function $f=\sum_{m=0}^{\infty}B_m \chi(m,x):\Z_p \rightarrow \Z_p$ satisfies the relationship $f=\Delta g$ for a suitable 1-Lipschitz function $g=
\sum_{m=0}^{\infty}{\tilde B}_m \chi(m,x),$ then it is known from Proposition \ref{lipcond} that for $n\geq 1,$ $$\sum_{m=p^{n-1}}^{p^n-1}B_m  = {\tilde B}_{p^n} -p{\tilde B}_{p^{n-1}}.$$

2. If the additional condition $g(0)=0$ is satisfied, then by Theorem 34.1 in \cite{S}, we have
$$\sum_{m=0}^{p^n-1}f(m)=g(p^n)= {\tilde B}_{p^n}$$
for all $n \geq 1.$

From this point onward, we assume that $f:\Z_p \rightarrow \Z_p$ is a measure-preserving 1-Lipschitz function.
For a nonnegative integer $m,$ we write
\begin{eqnarray}\label{fmn}
f(m)=\sum_{i=0}^{\infty}f_{mi}p^i ~{\rm with}~0 \leq f_{mi} \leq p-1 ~(i=0, 1, \cdots)
\end{eqnarray}
For an integer $n\geq1,$ we define $S_n$ to be
\begin{eqnarray}\label{sn}
S_n=\sum_{m=0}^{p^n -1}f_{mn}.
\end{eqnarray}
From Lemma \ref{mpid}, we immediately see that for all $n\geq2,$

\begin{eqnarray}\label{BnSn}
\sum_{m=p^{n-1}}^{p^n-1}B_m \equiv 0\pmod{p^{n+1}} \Leftrightarrow \sum_{m=0}^{p^n-1}f(m) \equiv p\sum_{m=0}^{p^{n-1}-1}f(m) \pmod{p^{n+1}}.
\end{eqnarray}

Because $f$ is measure-preserving,
the congruence on the right-hand side of Eq.(\ref{BnSn}) is equivalent to rewriting it as
$$ \frac{1}{2}(p^n -1)p^n +S_np^n \equiv p\sum_{m=0}^{p^{n-1}-1}f(m) \pmod{p^{n+1}}.$$
Canceling $p$ out, we have
$$\frac{1}{2}(p^n -1)p^{n-1} +S_np^{n-1} \equiv \sum_{m=0}^{p^{n-1}-1}f(m) \pmod{p^{n}}.$$
Because $f$ is again measure-preserving, we have
$$\frac{1}{2}(p^n -1)p^{n-1}+S_np^{n-1} \equiv \frac{1}{2}(p^{n-1} -1)p^{n-1} +S_{n-1}p^{n-1} \pmod{p^{n}}.$$
Canceling $p^{n-1}$ out gives
$$\frac{1}{2}(p-1)p^{n-1}+S_n \equiv S_{n-1} \pmod{p}~~(n\geq2).$$
This gives the following congruence:
\[S_n \equiv  \left \{ \begin{array}{ll}
  S_{n-1} \pmod{p}~~(n\geq2)~if~ p \not =2;\\
  S_{n-1} \pmod{2}~~(n\geq3)~ if~ p=2.
\end{array}
 \right.
 \]
On the other hand, because $f$ is measure-preserving, by proposition \ref{Mp-sumform}, we obtain
\begin{eqnarray}\label{sumbm}
\sum_{m=p^{n-1}}^{p^n-1}B_m \equiv \frac{1}{2}(p-1)p^{2n-1} +T_np^n \pmod{p^{n+1}}.
\end{eqnarray}
This gives the following equivalence: For the case $(p,n)$ in which $n\geq2$ if the prime $p$ is odd, and $n \geq 3$ otherwise,
 we have either
$$\sum_{m=p^{n-1}}^{p^n-1}B_m \equiv 0 \pmod{p^{n+1}} \Leftrightarrow T_n \equiv 0\pmod{p} ,$$
or
$$\sum_{m=p^{n-1}}^{p^n-1}B_m \equiv T_np^n \not \equiv 0 \pmod{p^{n+1}} \Leftrightarrow T_n \not \equiv 0\pmod{p} .$$

For the case $(p,n)=(2,2),$ we have from Eq.(\ref{sumbm}) that  either
$$\sum_{m=2}^{3}B_m \equiv 0 \pmod{2^{3}} \Leftrightarrow T_2 \equiv 1\pmod{2},$$
or
$$\sum_{m=2}^{3}B_m \equiv 4 \pmod{2^{3}} \Leftrightarrow T_2 \equiv 0 \pmod{2}.$$
From Lemma \ref{mpid} and Eq.(\ref{sumbm}) we deduce the following congruence: For all $n\geq2,$ we have
$$ T_n \equiv S_n -S_{n-1} \pmod{p}.$$

In sum, we have the following equivalence:
\begin{theorem}\label{mpeqp2}
Let $f(x)=\sum_{m=0}^{\infty}B_m \chi(m,x):\Z_p \rightarrow \Z_p$ be a measure-preserving 1-Lipschitz function represented
by the van der Put series. In addition, let $b_n, T_n$ and $S_n$ be defined as in Proposition \ref{Mp-sumform} and in Eq.(\ref{sn}).
Then we have the following equivalence:

(1)$n=2:$ \\
(a) $p=2:$
$$\sum_{m=2}^{2^2-1}B_m \equiv4 \pmod{2^{3}}   \Leftrightarrow \sum_{m=2}^{2^2-1}b_m  \equiv 2 \pmod{2^{3}} $$
$$
\Leftrightarrow S_2  \equiv S_1  \pmod{2}
\Leftrightarrow T_2\equiv 0 \pmod{2};$$
 or
\noindent $$\sum_{m=2}^{2^2-1}B_m \equiv0 \pmod{2^{3}}   \Leftrightarrow \sum_{m=2}^{2^2-1}b_m  \equiv 0 \pmod{2^{3}}$$
$$
\Leftrightarrow S_2  \equiv S_1 +1 \pmod{2}
\Leftrightarrow T_2\equiv 1\pmod{2}.$$\\
(b) $p>2:$
$$\sum_{m=p}^{p^2-1}B_m \equiv rp^2 \pmod{p^{3}}  \Leftrightarrow \sum_{m=p}^{p^2-1}b_m \equiv rp \pmod{p^2} $$
$$
\Leftrightarrow S_{2} \equiv S_{1}
 +r \pmod{p}
\Leftrightarrow T_2\equiv r \pmod{p}.$$

(2) $n\geq3$ and any prime $p:$\\
$$\sum_{m=p^{n-1}}^{p^n-1}B_m \equiv rp^n \pmod{p^{n+1}}  \Leftrightarrow \sum_{m=p^{n-1}}^{p^n-1}b_m \equiv rp \pmod{p^2} $$
$$
\Leftrightarrow S_{n} \equiv S_{n-1}
 +r \pmod{p}
\Leftrightarrow T_n\equiv r \pmod{p}.$$

\end{theorem}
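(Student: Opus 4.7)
The plan is to chain together the three identifications already assembled in the paragraphs immediately preceding the theorem, and to isolate the single edge case $(p,n)=(2,2)$ where the ``Gauss term'' does not vanish.

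First, the equivalence between $\sum_{m=p^{n-1}}^{p^n-1} B_m \pmod{p^{n+1}}$ and $\sum_{m=p^{n-1}}^{p^n-1} b_m \pmod{p^2}$ is immediate from the defining relation $B_m = p^{n-1}b_m$ for $p^{n-1}\le m\le p^n-1$: one has $\sum B_m = p^{n-1}\sum b_m$, and dividing a congruence modulo $p^{n+1}$ through by $p^{n-1}$ yields a congruence modulo $p^2$. This is the first $\Leftrightarrow$ on every line of the statement.

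Second, Proposition~\ref{Mp-sumform} reads $\sum_{m=p^{n-1}}^{p^n-1} B_m \equiv \tfrac{1}{2}(p-1)p^{2n-1} + T_np^n \pmod{p^{n+1}}$. For odd $p$ with $n\ge 2$ one has $v_p\!\left(\tfrac{1}{2}(p-1)p^{2n-1}\right)=2n-1\ge n+1$, and for $p=2$ with $n\ge 3$ one has $v_2\!\left(\tfrac{1}{2}(p-1)p^{2n-1}\right)=2n-2\ge n+1$; in either case the leading term is killed modulo $p^{n+1}$, so the congruence collapses to $\sum B_m\equiv T_np^n\pmod{p^{n+1}}$. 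This pins the residue of $\sum B_m$ modulo $p^{n+1}$ exactly to the residue of $T_n$ modulo $p$, giving the last $\Leftrightarrow$ in cases (1b) and (2). The lone exception is $(p,n)=(2,2)$, where $\tfrac{1}{2}(p-1)p^{2n-1}=4$ does not vanish modulo $8$; there the congruence becomes $\sum_{m=2}^{3} B_m\equiv 4 + 4T_2\pmod{8}$, which produces the two alternatives on line (1a).

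Third, the middle $\Leftrightarrow$ (involving $S_n\equiv S_{n-1}+r\pmod p$) follows from the identity $T_n\equiv S_n-S_{n-1}\pmod p$ already proved just above the theorem by combining Lemma~\ref{mpid}, the measure-preserving evaluation $\sum_{m=0}^{p^k-1} f(m)\equiv \tfrac{1}{2}(p^k-1)p^k + S_kp^k\pmod{p^{k+1}}$, and Proposition~\ref{Mp-sumform}; substituting this relation into the second step supplies the remaining equivalence. The only real bookkeeping hurdle is the $p$-adic valuation count in $\tfrac{1}{2}(p-1)p^{2n-1}$ that singles out $(p,n)=(2,2)$; beyond that the proof is a direct assembly of the preceding two lemmas, which I would present as a short line-by-line verification in the two subcases rather than a fresh computation.
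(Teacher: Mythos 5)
Your proposal is correct and follows essentially the same route as the paper: the paper's own ``proof'' is the chain of observations immediately preceding the theorem, namely the rescaling $B_m=p^{n-1}b_m$, the congruence of Proposition~\ref{Mp-sumform} with the valuation analysis of the term $\tfrac{1}{2}(p-1)p^{2n-1}$ that isolates $(p,n)=(2,2)$, and the identity $T_n\equiv S_n-S_{n-1}\pmod p$ obtained from Lemma~\ref{mpid} together with measure-preservation. Nothing essential is missing.
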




\subsection{Alternative proofs of Anashin's and Anashin et al 's results }

In this section, we use Theorem \ref{mpeqp2} to provide an alternative proof of Theorem \ref{putcri2} (iv).
For this, we need the following lemma, which is an analog in $\Z_2$ of the result for the formal power series ring $\F_2[[T]]$ over the
field $\F_2$ of two elements (see Lemma 1 in \cite{LSY}).
\begin{lemma}\label{p=2equi}
Let $f:\Z_2 \rightarrow \Z_2$ be a measure-preserving 1-Lipschitz function such that $f$ is transitive modulo $2^n, n \geq 1.$
Then $f$ is transitive modulo $2^{n+1}$ if and only if $S_n$ is odd, where $S_n$ is defined as in Eq.(\ref{sn}).
\end{lemma}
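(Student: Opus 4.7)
The plan is to compute $f^{2^n}(0) \bmod 2^{n+1}$ directly and show that it equals $2^n S_n$ modulo $2^{n+1}$. Set $x_k := f^k(0)$. Since $f$ is transitive modulo $2^n$, the residues $x_0, x_1, \ldots, x_{2^n-1}$ form a complete system modulo $2^n$, so $x_{2^n} \equiv 0 \pmod{2^n}$ and hence $x_{2^n} \bmod 2^{n+1} \in \{0, 2^n\}$. Consequently, the period of the orbit of $0$ modulo $2^{n+1}$ is either $2^n$ or $2^{n+1}$, and $f$ is transitive modulo $2^{n+1}$ if and only if $x_{2^n} \equiv 2^n \pmod{2^{n+1}}$. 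Thus it suffices to establish that $x_{2^n} \equiv 2^n S_n \pmod{2^{n+1}}$.

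The technical heart of the argument is the following identity, valid for any measure-preserving 1-Lipschitz $f$ and any $a, y \in \Z_2$:
\[
f(a + 2^n y) \equiv f(a) + 2^n (y \bmod 2) \pmod{2^{n+1}}. \qquad (\ast)
\]
Indeed, by 1-Lipschitz both $f(a)$ and $f(a + 2^n)$ lie in $\{f(a), f(a) + 2^n\}$ modulo $2^{n+1}$; by measure-preservation and Proposition \ref{mper}, $f$ is a bijection modulo $2^{n+1}$, so these two images must be distinct, forcing $f(a + 2^n) \equiv f(a) + 2^n \pmod{2^{n+1}}$. For a general $y$, the difference $2^n y - 2^n(y \bmod 2) = 2^{n+1} \lfloor y/2 \rfloor$ is a multiple of $2^{n+1}$, and 1-Lipschitz then reduces the statement to the case $y \in \{0, 1\}$, which has just been handled.

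Now write $x_k = a_k + 2^n y_k$ with $a_k := x_k \bmod 2^n \in \{0, 1, \ldots, 2^n - 1\}$. Applying $(\ast)$ to both $x_k$ and $f(x_k)$ gives $x_k \equiv a_k + 2^n(y_k \bmod 2)$ and $f(x_k) \equiv f(a_k) + 2^n(y_k \bmod 2) \pmod{2^{n+1}}$, whence
\[
f(x_k) - x_k \equiv f(a_k) - a_k \pmod{2^{n+1}}.
\]
Summing over $k = 0, \ldots, 2^n - 1$ and using that $\{a_k\}_{k=0}^{2^n-1} = \{0, 1, \ldots, 2^n - 1\}$ (transitivity mod $2^n$) gives
\[
x_{2^n} \;=\; \sum_{k=0}^{2^n-1}\bigl(f(x_k) - x_k\bigr) \;\equiv\; \sum_{m=0}^{2^n-1}\bigl(f(m) - m\bigr) \pmod{2^{n+1}}.
\]

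Finally, since $f$ is a bijection modulo $2^n$, the set $\{f(m) \bmod 2^n : 0 \le m < 2^n\}$ equals $\{0, 1, \ldots, 2^n - 1\}$. Decomposing $f(m) = (f(m) \bmod 2^n) + 2^n \lfloor f(m)/2^n \rfloor$ and noting $\lfloor f(m)/2^n \rfloor \equiv f_{mn} \pmod 2$ yields $\sum_{m=0}^{2^n - 1} f(m) \equiv \sum_{m=0}^{2^n-1} m + 2^n S_n \pmod{2^{n+1}}$. Subtracting produces $x_{2^n} \equiv 2^n S_n \pmod{2^{n+1}}$, which completes the proof. The chief obstacle is isolating the identity $(\ast)$: this is what lets us replace the unknown orbit $\{x_k\}$ by the explicit complete residue system $\{0, 1, \ldots, 2^n - 1\}$ inside the summation, collapsing the problem to a simple parity count.
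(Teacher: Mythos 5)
Your proof is correct, and it takes a genuinely different route from the paper's. The paper also reduces the lemma to deciding whether $f^{2^n}(x_0)\equiv x_0$ or $x_0+2^n \pmod{2^{n+1}}$, and also relies on the key congruence $f(a+2^n)\equiv f(a)+2^n \pmod{2^{n+1}}$ (its Eq.~(\ref{fpow})), but it connects this to $S_n$ by a combinatorial parity count: it tracks how many elements of $R_{<n}=\{0,\dots,2^n-1\}$ are sent by $f$ into $R_{<n}+2^n$, pairs off the ``crossings'' between the two rows of the trajectory diagram, and treats the ``if'' and ``only if'' directions separately. You instead telescope $f^{2^n}(0)=\sum_{k}\bigl(f(x_k)-x_k\bigr)$, use your identity $(\ast)$ to replace each summand $f(x_k)-x_k$ by $f(a_k)-a_k$ with the $a_k$ running over a complete residue system modulo $2^n$, and arrive at the exact congruence $f^{2^n}(0)\equiv 2^nS_n \pmod{2^{n+1}}$. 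This buys you both directions of the equivalence in a single computation and replaces the paper's somewhat informal crossing count with a checkable identity; the paper's version, in exchange, keeps the dynamical picture (the two-row cycle structure modulo $2^{n+1}$) more visible. Both arguments ultimately rest on the same two pillars: bijectivity modulo $2^{n+1}$ forcing $f(a+2^n)\equiv f(a)+2^n$, and transitivity modulo $2^n$ making the orbit of $0$ a complete residue system modulo $2^n$.
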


\begin{proof}
($\Rightarrow$): We put $R_{<n}=\{0, 1, \cdots, 2^{n}-1  \}$ for a complete set of the least nonnegative representatives of $\Z_2/2^n\Z_2.$
When we consider the trajectory of $f(x)$ modulo $2^k,$ we view $x$ and $f(x)$ as elements whose representatives are in $R_{<k}.$
If $f$ is transitive modulo $2^{n+1},$ then there exist $x_0, x_1 \in R_{<n}$ such that $f(x_0)=x_1 + 2^n.$
Starting with $x_0$ as the initial point, we list the trajectory of $f$ modulo $2^{n+1}$ as follows:

\begin{alignat}{3}
 x_0 &\rightarrow  f(x_0) \cdots &\rightarrow   \cdots &\rightarrow f^{2^n-1}(x_0) &\rightarrow \nonumber \\
\rightarrow f^{2^n}(x_0)  &\rightarrow f^{2^n +1}(x_0)\cdots  &\rightarrow \cdots &\rightarrow f^{2^{n+1}-1}(x_0) &\rightarrow \label{tarrary}  \\
\rightarrow  f^{2^{n+1}}(x_0)&=x_0 +2^{n+1}u \pmod{2^{n+1}} \nonumber
\end{alignat}
where $u \in 1+2\Z_2.$
Because $f$ is both measure-preserving and transitive modulo $2^{n+1},$ we have $f^{2^n}(x_0)=x_0 +2^n.$ We use this relationship to iteratively derive
the relationship
\begin{eqnarray}\label{fpow}
f^{2^n +i}(x_0) \equiv f^{i}(x_0) +2^n \pmod{2^{n+1}}~~( 0 \leq i \leq 2^n-1).
\end{eqnarray}
We claim that $S_n$ is odd and thus that $S_n= \# \{ 0 \leq m \leq 2^n-1:  f_{mn} =1 \}$ is odd, where $f_{mn}$ is defined in Eq.(\ref{fmn}).
If there exists a number in $R_{<n}$ other than $x_0$ mapped by $f$ to an element in $R_{<n} +2^n$ in the first row of the diagram in Eq.(\ref{tarrary}),
then there exists another element in $R_{<n} +2^n$ that maps to an element in $R_{<n}.$ By the relationship in Eq. (\ref{fpow}), we see that
there must be an element in $R_{<n}$ that is mapped by $f$ to an element in $R_{<n} +2^n$ in the second row.
This implies that the total number of elements  in $R_{<n}$ that are mapped by $f$ to an element in $R_{<n} +2^n$ is odd and thus that $S_n$ is odd.

Conversely, assuming that $S_n$ is odd, we see that  there exist $x_0, x_1 \in R_{<n}$ such that $f(x_0)=x_1 + 2^n.$
From the above diagram, because $f$ is transitive modulo $2^n,$ we observe that
the elements of the first row as well as those in the second row are distinct modulo $2^n.$
We now show that $f^{2^n}(x_0)=x_0 +2^n.$ Otherwise, we have $f^{2^n}(x_0)=x_0,$ and  therefore we see that $\# \{ 0 \leq m \leq 2^n-1:  f_{mn} =1 \}$ is even,
which is a contradiction. As in the "only if" part, we use $f^{2^n}(x_0)=x_0 +2^n$ to derive the relationship in Eq.(\ref{fpow}). Therefore, these relationships imply
that  the trajectory of $f$ modulo $2^{n+1}$ are all distinct modulo $2^{n+1}.$ Hence, $f$ is transitive modulo $2^{n+1}.$

\end{proof}

\begin{theorem}\label{ayresult}
Let $f(x)=\sum_{m=0}^{\infty}B_m\chi(m,x) :\Z_2 \rightarrow \Z_2$ be a 1-Lipschitz function.
Then $f$ is ergodic  if and only if all conditions in Theorem \ref{putcri2} (iii) are satisfied.

\end{theorem}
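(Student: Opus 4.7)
The plan is to combine Lemma \ref{p=2equi} with the equivalences recorded in Theorem \ref{mpeqp2} and induct on $n$: at each stage, lifting transitivity modulo $2^n$ to transitivity modulo $2^{n+1}$ is controlled by the parity of $S_n$, and that parity is precisely what the listed coefficient conditions regulate. First I would dispose of measure-preservation: conditions (1),(2),(4) of Theorem \ref{putcri2}(iii) imply $b_0+b_1\equiv 1\pmod 2$ and $|b_m|=1$ for $m\ge 2$, so Theorem \ref{putcri2}(ii) gives that $f$ is measure-preserving. This is essential because Lemma \ref{p=2equi} and Theorem \ref{mpeqp2} both presume that hypothesis.

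Next I would establish the base cases. Transitivity modulo $2$ says $\{f(0),f(1)\}=\{0,1\}\pmod 2$; since $f(0)=b_0$ and $f(1)=b_1$, this is exactly condition (1) together with the parity piece of (2). For transitivity modulo $4$, Lemma \ref{p=2equi} reduces the task (given transitivity mod $2$) to verifying that $S_1$ is odd. A direct computation of the binary expansions of $b_0\equiv 1\pmod 2$ and $b_1\equiv 0\pmod 2$ shows $S_1\equiv 1\pmod 2$ if and only if the coefficient of $2$ in $b_0+b_1$ is $1$, i.e.\ $b_0+b_1\equiv 3\pmod 4$, which is condition (2). For the step from mod $4$ to mod $8$ I would invoke the $(p,n)=(2,2)$ clause of Theorem \ref{mpeqp2}: since $S_1$ is odd, $S_2$ is odd iff $\sum_{m=2}^{3}B_m\equiv 4\pmod 8$, and unfolding $B_m=2b_m$ gives exactly condition (3), $b_2+b_3\equiv 2\pmod 4$.

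For the inductive step $n\ge 3$, suppose $f$ is already transitive modulo $2^n$, so by the preceding stage $S_{n-1}$ is odd. Lemma \ref{p=2equi} says transitivity modulo $2^{n+1}$ is equivalent to $S_n$ being odd, hence to $S_n\equiv S_{n-1}\pmod 2$. By Theorem \ref{mpeqp2}(2) applied with $r=0$, this is equivalent to
\[
\sum_{m=2^{n-1}}^{2^n-1}B_m\equiv 0\pmod{2^{n+1}},
\]
and dividing through by $2^{n-1}$ turns it into $\sum_{m=2^{n-1}}^{2^n-1}b_m\equiv 0\pmod 4$, which is precisely condition (5). Running this inductive chain in both directions establishes the biconditional: conditions (1)–(5) are equivalent to transitivity modulo every $2^n$, which by Proposition \ref{mper}(ii) is equivalent to ergodicity.

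The main obstacle is the bookkeeping that separates the $n=2$ layer from the $n\ge 3$ layers. The formula of Proposition \ref{Mp-sumform} has the leading term $\tfrac12(p-1)p^{2n-1}$, which for $p=2$ contributes a genuine $2^{2n-1}$ that is $\equiv 0\pmod{2^{n+1}}$ only once $n\ge 3$; at $n=2$ it produces the residue $4$, which is exactly why condition (3) asks for $b_2+b_3\equiv 2\pmod 4$ rather than $\equiv 0\pmod 4$. Getting this transition right, and carefully tracking that ``$S_n$ odd'' translates under Theorem \ref{mpeqp2} to the congruence with the correct $r$ (namely $r=0$ for $n\ge 3$ and $r=1$ for $n=2$), is the delicate part of the proof; once it is in place, the induction is mechanical.
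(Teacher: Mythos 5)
Your argument is correct, and the ``only if'' direction coincides with the paper's: establish measure-preservation, note that transitivity modulo $2$ and $S_1$ odd encode conditions (1)--(2), and then translate ``$S_n$ odd'' through Theorem \ref{mpeqp2} into conditions (3) and (5). Where you genuinely diverge is the ``if'' direction. The paper dispatches it in one line by citing Theorem \ref{eprop}, i.e.\ by showing that the coefficient conditions force the structural form $f=B_0+x+2\Delta g$ and then invoking Lemma \ref{dxperg} (Lemma 4.41 of \cite{AK}), which is quoted without proof. You instead run the same transitivity induction in both directions: Lemma \ref{p=2equi} lifts transitivity from $2^n$ to $2^{n+1}$ exactly when $S_n$ is odd, and Theorem \ref{mpeqp2} converts that parity condition into the stated congruences on the $b_m$, with the $n=2$ layer ($r=1$, residue $4$, condition (3)) correctly separated from the $n\ge3$ layers ($r=0$, condition (5)). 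This buys you a proof that is self-contained within the paper's own lemmas and avoids the external ergodicity criterion entirely, at the cost of losing the structural representation $1+x+2\Delta g$ that the paper reuses immediately afterward in Corollary \ref{1+x}. Two small points of bookkeeping: your reduction of measure-preservation to Theorem \ref{putcri2}(ii) is legitimate but could equally be run through Theorem \ref{MPinp} to stay independent of the quoted criterion; and in your closing remark the leading term of Proposition \ref{Mp-sumform} at $p=2$ is $\tfrac12(p-1)p^{2n-1}=2^{2n-2}$, not $2^{2n-1}$ --- the conclusion (residue $4$ at $n=2$, vanishing modulo $2^{n+1}$ for $n\ge3$) is unaffected.
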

\begin{proof}
We see that the "if" part follows immediately from Theorem \ref{eprop} because the additional condition there is trivially satisfied for $p=2.$
For the "only if " part, we note that $S_1=1,$ so this direction follows from Lemma \ref{p=2equi} and Theorem \ref{mpeqp2}.
\end{proof}

As a corollary, we reproduce Corollary \ref{1+xDE}.
\begin{corollary}\label{1+x}
Let $f:\Z_2 \rightarrow \Z_2$ be a 1-Lipschitz function.
Then, (1) $f$ is measure-preserving if and only if $f$ is of the form $f(x)=d+x +2g(x)$ for some
 2-adic integer $d\in \Z_2$ and some 1-Lipschitz function $g(x).$

(2) $f$ is ergodic if and only if $f$ is of the form $f(x)= 1+x +2\Delta g(x)$ for some 1-Lipschitz function $g(x).$
\end{corollary}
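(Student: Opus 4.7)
My plan is to chain together the van der Put characterizations already established so that every equivalence in the statement reduces to coefficient bookkeeping. The ``if'' directions are essentially free. For (1), using the expansion $x = \chi(1,x)+\sum_{m\ge 2}2^{\lfloor\log_2 m\rfloor}\chi(m,x)$ from Eq.~(\ref{candx}) and $d = d\chi(0,x)+d\chi(1,x)$, I read off the van der Put coefficients of $f = d+x+2g$ directly and match them against Theorem~\ref{putcri2}(ii); the required congruences hold regardless of $d$. For (2), the implication is immediate from Lemma~\ref{dxperg} with $p=2$ and $d=1$.

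For the ``only if'' of (1), let $f$ be measure-preserving with coefficients as in Theorem~\ref{putcri2}(ii). Setting $d=f(0)=b_0$ and $g(x)=(f(x)-d-x)/2$, the congruence $b_0+b_1\equiv 1\pmod 2$ ensures $f(x)-d-x\in 2\Z_2$ for every $x$, so $g$ maps into $\Z_2$. Dividing the van der Put coefficients of $f-d-x$ by $2$, the $m$th coefficient of $g$ becomes $(b_m-1)\cdot 2^{\lfloor\log_2 m\rfloor-1}$ for $m\ge 2$; since each $b_m$ is a unit, this lies in $2^{\lfloor\log_2 m\rfloor}\Z_2$, so by Proposition~\ref{lips} $g$ is 1-Lipschitz.

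For the ``only if'' of (2), assume $f$ is ergodic. By Theorem~\ref{ayresult} all five conditions of Theorem~\ref{putcri2}(iii) hold; in particular $b_0\equiv 1 \pmod 2$, so part (1) yields a 1-Lipschitz $h$ with $f(x)=1+x+2h(x)$ after absorbing $(b_0-1)/2$ into the quotient. It then suffices to produce a 1-Lipschitz $g$ with $h=\Delta g$, which by the proposition immediately preceding Theorem~\ref{axb} holds once the block sums $\sum_{m=0}^{1}H_m \equiv 0 \pmod 2$ and $\sum_{m=2^{n-1}}^{2^n-1}H_m \equiv 0 \pmod{2^n}$ ($n\ge 2$) on the van der Put coefficients $H_m$ of $h$ are verified. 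Expressing these $H_m$ in terms of the $b_m$ via the formulas from (1) turns the block-sum conditions into exactly the congruences (2), (3), and (5) of Theorem~\ref{putcri2}(iii), which hold by hypothesis.

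The main obstacle will be the bookkeeping at the low-index boundary $m\in\{0,1\}$, where $\lfloor\log_2 m\rfloor=0$ and the van der Put expansions of the constant $d$ and of the identity $x$ distribute across these two slots in a slightly irregular way; once that ledger is reconciled, every congruence lines up mechanically.
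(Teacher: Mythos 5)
Your proposal is correct and follows essentially the same route as the paper: the ``if'' directions come from Lemma~\ref{dxperg} and the sufficiency half of the van der Put measure-preservation criterion, and the ``only if'' directions chain Theorem~\ref{ayresult} with the coefficient conditions of Theorem~\ref{putcri2} and the existence of a 1-Lipschitz $g$ with $\Delta g$ prescribed (the unnumbered proposition), which is exactly what the paper's citation of Theorems~\ref{ayresult} and~\ref{eprop} amounts to. Your explicit ledger --- $H_0+H_1\equiv 0\pmod 2 \Leftrightarrow b_0+b_1\equiv 3\pmod 4$, the $n=2$ block sum $\Leftrightarrow b_2+b_3\equiv 2\pmod 4$, and the $n\ge 3$ block sums $\Leftrightarrow$ condition (5) --- checks out, so the only difference from the paper is that you spell out the bookkeeping it leaves implicit.
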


\begin{proof} For the first assertion, the "if" part  follows from Proposition \ref{mper} (3). And the "only if " part
comes from Theorem \ref{MPinp}, because the conditions there is necessary in the case $p=2.$

For the second assertion, the "if" part  follows from Lemma 4.41 in \cite{AK} and the "only if " part follows from Theorems \ref{ayresult} and \ref{eprop}.
\end{proof}

We now use  Corollary \ref{1+x} to provide an alternate proof of Theorem \ref{Ana-binom} (iv).
For this, we first need to provide the 1-Lipschitz conditions in Theorem \ref{Ana-binom} (i). However, we just mention that this property can be proved in the similar way by using the well-known binomial formula in \cite{LSY, Ya} for Carlitz polynomials over functions fields.
\begin{corollary}
Let $ f(x)=\sum_{m= 0}^{\infty}a_m\binom{x}{m}:\Z_2 \rightarrow \Z_2$ be a 1-Lipschitz function.
Then $f$ is ergodic if and only if all conditions in Theorem \ref{Ana-binom} (iv) are satisfied.
\end{corollary}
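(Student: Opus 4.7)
The plan is to lift the structural description of ergodic 1-Lipschitz maps on $\Z_2$ provided by Corollary~\ref{1+x}(2)---namely that $f$ is ergodic iff $f(x)=1+x+2\Delta g(x)$ for some 1-Lipschitz $g:\Z_2\to\Z_2$---to the level of Mahler coefficients, and to match it against the 1-Lipschitz characterization in Theorem~\ref{Ana-binom}(i). The bridge is the classical identity $\Delta\binom{x}{m}=\binom{x}{m-1}$. Writing $g=\sum_{m\geq 0}c_m\binom{x}{m}$, this identity gives $\Delta g(x)=\sum_{m\geq 0}c_{m+1}\binom{x}{m}$, and therefore
$$1+x+2\Delta g(x)=(1+2c_1)\binom{x}{0}+(1+2c_2)\binom{x}{1}+\sum_{m\geq 2}2c_{m+1}\binom{x}{m}.$$
Comparing with $f=\sum_{m\geq 0}a_m\binom{x}{m}$ yields a coefficient dictionary: $a_0=1+2c_1$, $a_1=1+2c_2$, and $a_m=2c_{m+1}$ for $m\geq 2$.

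For the ``only if'' direction, assume $f$ is ergodic. By Corollary~\ref{1+x}(2), pick a 1-Lipschitz $g=\sum c_m\binom{x}{m}$ with $f=1+x+2\Delta g$. Applying Theorem~\ref{Ana-binom}(i) to $g$ gives $|c_m|\leq|2|^{\lfloor\log_2 m\rfloor}$ for $m\geq 1$. Plugging this into the dictionary immediately produces $a_0\equiv 1\pmod 2$ (since $c_1\in\Z_2$), $a_1\equiv 1\pmod 4$ (since $|c_2|\leq 1/2$), and $a_m\equiv 0\pmod{2^{\lfloor\log_2(m+1)\rfloor+1}}$ for $m\geq 2$ (since $|c_{m+1}|\leq|2|^{\lfloor\log_2(m+1)\rfloor}$). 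These are precisely the three conditions in Theorem~\ref{Ana-binom}(iv).

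For the ``if'' direction, assume the three conditions hold. Define $c_0=0$, $c_1=(a_0-1)/2$, $c_2=(a_1-1)/2$, and $c_{m+1}=a_m/2$ for $m\geq 2$. The hypotheses guarantee that each $c_m\in\Z_2$ and that $|c_m|\leq|2|^{\lfloor\log_2 m\rfloor}$ for all $m\geq 1$, so Theorem~\ref{Ana-binom}(i) ensures that $g=\sum c_m\binom{x}{m}$ is 1-Lipschitz. By construction $f=1+x+2\Delta g$, hence $f$ is ergodic by Corollary~\ref{1+x}(2).

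The main obstacle, if it can be called one, is purely bookkeeping: one must verify that the index shift $c_{m+1}\leftrightarrow a_m$ lines up correctly with the shift $\lfloor\log_2 m\rfloor\mapsto\lfloor\log_2(m+1)\rfloor$ appearing in the exponent on the right-hand side of condition~(3) of Theorem~\ref{Ana-binom}(iv), and that the extra factor of $2$ in the dictionary accounts for the ``$+1$'' in that exponent. No further ergodic-theoretic input is needed beyond Corollary~\ref{1+x}(2), which is the place where all the work of the paper has already been concentrated.
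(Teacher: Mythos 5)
Your proposal is correct and takes the same route as the paper: the paper's entire proof is the single line ``It follows from Corollary~\ref{1+x},'' and what you have written is exactly the omitted bookkeeping, namely translating $f=1+x+2\Delta g$ into Mahler coefficients via $\Delta\binom{x}{m}=\binom{x}{m-1}$ and matching the resulting dictionary against the 1-Lipschitz bound of Theorem~\ref{Ana-binom}(i). The index-shift and factor-of-$2$ verifications you flag all check out, so no further comment is needed.
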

\begin{proof}
It follows from Corollary \ref{1+x}.
\end{proof}%

\section{ An Application}

In this final section, we use Theorem \ref{putcri2} to derive a characterization for the ergodicity of a polynomial over $\Z_2$ in term of its
coefficients. For simplicity, we take a polynomial $f \in \Z_2[x]$ with $f(0)=1.$ That is, let $f =a_dx^d +a_{d-1}x^{d-1}+ \cdots + a_1x+1 $ be a polynomial of degree $d$ over $\Z_2.$ Then we set
$$ A_0 =\sum_{i\equiv 0 \pmod{2}, i >0} a_i, ~~ A_1= \sum_{i\equiv 1 \pmod{2}} a_i.$$

\begin{theorem}\label{ploy}
The polynomial $f$ is ergodic over $\Z_2$ if and only if the following conditions are simultaneously satisfied:
\begin{eqnarray*}
 a_1 \equiv 1 \pmod{2};\\
A_1 \equiv 1 \pmod{2};\\
A_0 +A_1 \equiv 1 \pmod{4};\\
a_1 + 2a_2 +A_1 \equiv 2 \pmod{4}.
\end{eqnarray*}
\end{theorem}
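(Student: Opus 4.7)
The plan is to expand the polynomial $f$ in the van der Put basis and then match the five conditions of Theorem \ref{putcri2}~(iii) against the four listed conditions on the coefficients. Writing $f(x)=\sum_{m\ge 0}B_m\chi(m,x)$ and using $B_m=f(m)-f(m-2^k)$ for $2^k\le m<2^{k+1}$, the binomial identity
$$m^i-(m-2^k)^i = i\,m^{i-1}\,2^k-\binom{i}{2}m^{i-2}\,2^{2k}+\binom{i}{3}m^{i-3}\,2^{3k}-\cdots$$
gives, after dividing by $2^k$, the approximation $b_m\equiv f'(m)\pmod 2$ for $m\ge 2$ and the sharper $b_m\equiv f'(m)\pmod 4$ for $m\ge 4$ (since $2^{2k}\equiv 0\pmod{2^{k+2}}$ whenever $k\ge 2$), where $f'$ denotes the formal derivative.

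Conditions (1)--(4) of Theorem \ref{putcri2}~(iii) then fall out almost directly. Condition (1) is automatic since $b_0=f(0)=1$. Condition (2), $b_0+b_1\equiv 3\pmod 4$, rewrites as $A_0+A_1\equiv 1\pmod 4$ because $b_1=f(1)=1+A_0+A_1$. Condition (4) asks that $f'(m)$ be odd for every $m\ge 2$; but $f'(m)\equiv a_1\pmod 2$ for $m$ even and $f'(m)\equiv A_1\pmod 2$ for $m$ odd, so this is precisely $a_1\equiv A_1\equiv 1\pmod 2$. For condition (3), a direct expansion yields $b_2\equiv a_1+2a_2\pmod 4$, while $3^i-1\equiv 2i+4\binom{i}{2}\pmod 8$ together with the clean identity $i+2\binom{i}{2}=i^2$ yields $b_3\equiv\sum_i i^2a_i\pmod 4$; since $i^2\equiv 1\pmod 4$ for $i$ odd and $i^2\equiv 0\pmod 4$ for $i$ even, this collapses to $b_3\equiv A_1\pmod 4$. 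Hence condition (3) becomes $a_1+2a_2+A_1\equiv 2\pmod 4$, matching the fourth listed condition.

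The main obstacle is condition (5): $\sum_{m=2^{n-1}}^{2^n-1}b_m\equiv 0\pmod 4$ for every $n\ge 3$. Since Theorem \ref{ploy} lists only four conditions, this must be shown automatic whenever $a_1$ and $A_1$ are odd. Using $b_m\equiv f'(m)\pmod 4$ for $m\ge 4$, the sum equals $\sum_{m=2^{n-1}}^{2^n-1}f'(m)\pmod 4$, which I split by the residue of $m$ modulo 4. For even $m$ every term $ia_im^{i-1}$ with $i\ge 2$ is divisible by $4$, so $f'(m)\equiv a_1\pmod 4$. Setting $D_1=\sum_{i\text{ odd}}ia_i$ and $D_2=\sum_{i\text{ even}}ia_i$, one has $f'(m)\equiv D_1+D_2\pmod 4$ when $m\equiv 1\pmod 4$ and $f'(m)\equiv D_1-D_2\pmod 4$ when $m\equiv 3\pmod 4$, so each pair of odd classes contributes $2D_1\equiv 2A_1\equiv 2\pmod 4$ because $D_1\equiv A_1\pmod 2$. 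For $n\ge 4$ the multiplicities $2^{n-2}$ and $2^{n-3}$ kill every contribution mod $4$, and for the critical case $n=3$ the four-term sum equals $2a_1+2D_1\equiv 2+2\equiv 0\pmod 4$ thanks to the established parities of $a_1$ and $A_1$. This settles (5) automatically, so the correspondence between conditions (1)--(5) of Theorem \ref{putcri2}~(iii) and the four conditions in Theorem \ref{ploy} is complete, yielding both directions of the equivalence.
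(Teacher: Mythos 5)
Your proof is correct and takes essentially the same route as the paper: both translate the five van der Put conditions of Theorem \ref{putcri2}~(iii) into coefficient conditions via the approximation $b_m\equiv f'(\cdot)\pmod{q(m)}$ obtained from the binomial expansion of $f(m)-f(m\underline{~~})$, and both show condition (5) is redundant once $a_1$ and $A_1$ are odd. The only cosmetic difference is that you expand around $m$ where the paper expands around $m\underline{~~}$; these agree modulo $q(m)$, so the arguments coincide.
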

\begin{proof}
From Theorem \ref{putcri2} (iii) or Theorem \ref{ayresult}, we derive the equivalent conditions as required.
Because
$B_0+ B_1 =f(0)+f(1)=2+A_0 +A_1,$ we can easily see that
$B_0 +B_1 \equiv 3 \pmod{4}$ is equivalent to $A_0 +A_1 \equiv 1 \pmod{4}.$
Because  $B_2=2b_2=f(2)-f(0), B_3=2b_3=f(3)-f(1),$  we see that
$b_2+b_3 \equiv 2 \pmod{4}$ is equivalent to $f(2)-f(0)+f(3)-f(1) \equiv 4 \pmod{8}.$
Because  $f(3)=\sum_{i=0}^{d}a_i3^i \equiv 1 +A_0 +3A_1 \pmod{8},$ we have the following equivalence:
$$b_2+b_3 \equiv 2 \pmod{4} \Longleftrightarrow a_1 + 2a_2 +A_1 \equiv 2 \pmod{4}.$$
For all $m\geq 2,$ we have
\begin{eqnarray}
B_m =f(m)-f(m\underline{~~}) &=&\sum_{i=1}^{d}a_i (m^i- m\underline{~~}^i ) \nonumber \\
 &=&\sum_{i=1}^{d}a_i\sum_{j=1}^{i}\binom{i}{j}m\underline{~~}^{i-j}q(m)^j \nonumber \\
 &=&\sum_{j=1}^{d} \left(\sum_{i=j}^{d}\binom{i}{j}a_im\underline{~~}^{i-j} \right) q(m)^j.\label{pmod4}
\end{eqnarray}
The preceding equation implies that condition (4) is equivalent to $f'(m\underline{~~}) \in 1+2\Z_2$ for all $m \geq2,$ where $f'(x)$ is the derivative of $f.$
Equivalently, $f'(0)=a_1 \in 1+2\Z_2$ and $f'(1) \equiv A_1 \in 1+ 2\Z_2.$
From Eq.(\ref{pmod4}), we can deduce that for all $m\geq 2,$
$b_m \equiv f'(m\underline{~~}) \pmod{q(m)}.$ From this, we obtain the following congruence: For $n\geq 3,$
\[
\begin{split}
\sum_{m=2^{n-1}}^{2^n -1}b_m \equiv \sum_{m=2^{n-1}}^{2^n -1}f'(m\underline{~~}) &=\sum_{j=0}^{2^{n-1} -1}f'(j) \\
 & \equiv 2^{n-3}(f'(0)+f'(1)+f'(2)+f'(3)) \\
 & \equiv 2^{n-2}(A_1 -a_1) \pmod{4}.
\end{split}
\]
This congruence implies that condition (5) is redundant.  This completes the proof.
\end{proof}

\noindent{\bf Remarks}
1. We first mention that all conditions in  Theorem \ref{ploy} are easily proved to be equivalent to those in \cite{DP} or \cite{La}.

2. We point out that the result for this theorem extends to a class of analytic functions on $\Z_p$ by which we mean those functions
represented by the Taylor series on all $\Z_p.$

3. The characterization for the  ergodicity of  1-Lipschitz functions provides a clue for a complete description of the necessary and sufficient
conditions for a polynomial function on $\Z_p$ in terms of its coefficients, as in Theorem \ref{ploy}.

4. Future research should use the results in this paper, particularly those for Theorems \ref{eprop} and \ref{mpeqp2}, to provide a complete description of an ergodic 1-Lipschitz function $ \Z_p$ represented by the van der Put series for all odd primes  $p.$


\begin{thebibliography}{?????}
\parskip=0pt
\itemsep=0pt


\bibitem{An1}  V. S. Anashin, Uniformly distributed sequences of $p$-adic integers, {\em Math. Notes} 55 (1994), no. 1-2, 109-133


\bibitem{AK}  V. Anashin, A. Khrennikov, {\em Applied algebraic dynamics,} de Gruyter Expositions in Mathematics, 49. Walter de Gruyter $\&$ Co., Berlin, 2009. xxiv+533 pp.

\bibitem{AKY1} V. Anashin, A. Khrennikov  and E. Yurova, Characterization of ergodicity of $p$-adic dynamical systems by using the van der Put basis, {\em Doklady Mathematics} 83 (3) (2011) 1-3

\bibitem{AKY2} V. Anashin, A. Khrennikov  and E. Yurova, $T$-Funtions Revisited: New Criteria for Bijectivity/Transitivity, arXiv:1111.3093v1


\bibitem{DP} F. Durand and F. Paccaut, Minimal polynomial dynamics on the set of 3-adic integers,{\em Bull. Lond. Math. Soc.} 41 (2009), no. 2, 302-314.


\bibitem{J} S. Jeong, Characterization of ergodicity of  $T$-adic maps on $\F_2[[T]]$ using digit derivatives basis, preprint.

\bibitem{KS} A. Klimov and A. Shamir, Cryptographic applications of T-functions, Selected areas in cryptography, 248-261, Lecture Notes in Comput. Sci., 3006, Springer, Berlin, 2004,

\bibitem{La} M. V. Larin, ¡®Transitive polynomial transformations of residue class rings¡¯, {\em Discrete Math. Appl.} 12 (2002)
127-140.

\bibitem{LSY} D. Lin, T. Shi and Z. Yang, Ergodic Theory over $\F_2[[T]],$ {\em Finite Fields and Their Applications,} 18 (2012), 473-491.

\bibitem{Ma1} K. Mahler, An interpolation series for a continuous function of a $p$-adic variable, {\em J. Reine Angew. Math,} 199  (1958),23-34.

\bibitem{Ma2} K. Mahler, {\em $p$-adic numbers and their applications,} second edition, Cambridge University Press. (1981).

\bibitem{S} W. Schikhof,  {\em Ultrametric Calculus,}
Cambridge University Press, 1984.

\bibitem{LN} R. Lidl and H. Niederreiter, {\em Finite fields,} Encyclopedia of Mathematics and its Applications 20
(Cambridge University Press, Cambridge, 1997)

\bibitem{vPut} M. van der Put, Alg`ebres de fonctions continues p-adiques, Universiteit Utrecht, 1967.
2

\bibitem{Ya} Z. Yang, $C^n$-functions over completions of ${\mathbb F}_r[T]$
at finite places of ${\mathbb F}_r(T)$, {\em J. Number Theory }108 (2004), no. 2, 346-374

\bibitem{Yu} E. Yurova, Van der Put basis and $p$-adic dynamics, {\em  P-adic Numbers, Ultrametric Analysis
and Applications }2 (2) (2010), 175-178.

\end{thebibliography}

\end{document}